\newtheorem{theorem}{Theorem}[section]
\newtheorem{corollary}{Corollary}[section]
\newtheorem{proposition}{Proposition}[section]
\newtheorem{example}[theorem]{Example}
\newtheorem{alg}[theorem]{Algorithm}
\newtheorem{remark}[theorem]{Remark}
\def \N{\mathbb{N}}
\def \P{\mathbb{P}}
\def\cA{{\mathcal A}}
\def\cB{{\mathcal B}}
\def\cX{{\mathcal X}}
\def\cY{{\mathcal Y}}
\def\rf#1{\left\lceil#1\right\rceil}
\def\({\left(}
\def\){\right)}
\def\[{\left[}
\def\]{\right]}
\def\<{\langle}
\def\>{\rangle}
\def\ts{{\hskip .1in}}
\begin{document}


\title[Conjugation symmetry map]{ Linear time equivalence of
  Littlewood--Richardson coefficient symmetry maps }

\author{Olga Azenhas, Alessandro Conflitti, Ricardo Mamede}
\address{CMUC, Centre for Mathematics, University of Coimbra, Apartado 3008,
3001--454 Coimbra, Portugal.\\
E--mail: \{oazenhas,conflitt,mamede\}@mat.uc.pt
}
\thanks{The three authors are supported by CMUC - Centro de Matem\'atica da
Universidade Coimbra. The second author is also supported by FCT
Portuguese Foundation of Science and Technology (Funda\c{c}\~{a}o
para a Ci\^{e}ncia e a Tecnologia) Grant SFRH/BPD/30471/2006.}

\keywords{Symmetry maps of Littlewood--Richardson coefficients;
conjugation symmetry map; linearly time reduction of Young tableaux
bijections; tableau--switching; Sch\"utzenberger involution.}

\begin{abstract}
 Benkart, Sottile, and Stroomer have completely
characterized by Knuth  and dual Knuth equivalence  a bijective
proof of the  Littlewood--Richardson coefficient conjugation
symmetry, i.e. $c_{\mu,\nu}^{\lambda}=c_{\mu^t,\nu^t}^{\lambda^t}$.
Tableau--switching provides an algorithm to produce  such a
bijective proof. Fulton has shown that the White and the
Hanlon--Sundaram maps  are versions of that bijection. In this paper
one exhibits explicitly the Yamanouchi word produced by that
conjugation symmetry map which on its turn leads to a new and very
natural version of the same map already considered independently. A
consequence of this latter construction is that using notions of
Relative Computational Complexity we are allowed to show that this
conjugation symmetry map is linear time reducible to the
Sch\"utzenberger  involution and reciprocally. Thus the
Benkart--Sottile--Stroomer conjugation symmetry map with the two
mentioned versions,  the three versions of the commutative symmetry
map, and Sch\"utzenberger involution, are linear time reducible to
each other. This answers a question posed by Pak and Vallejo.

\end{abstract}

\maketitle





\section{Introduction}

Given partitions $\mu$ and $\nu$,  the product $s_{\mu}s_{\nu}$ of
the corresponding Schur functions
 is a non-negative integral
linear combination of Schur functions
$$s_{\mu}s_{\nu}=\sum_{\lambda}
c_{\mu\;\nu}^{\lambda}s_{\lambda},$$ where  $c_{\mu\;\nu}^{\lambda}$
is  called the Littlewood-Richardson coefficient
\cite{lr,mac,sa,st}. Let $\lambda^t$ denote the conjugate or
transpose of the partition $\lambda$.  It is obvious from the
commutativity of multiplication that
$c_{\mu\;\nu}^{\lambda}=c_{\nu\;\mu}^{\lambda}$, called the
commutativity symmetry, and it is less obvious  the conjugation
symmetry $c_{\mu\;\nu}^{\lambda}=c_{\mu^t\;\nu^t}^{\lambda^t}.$ As
there are several Littlewood-Richardson rules to compute these
numbers,
   the combinatorics of their symmetries is quite intriguing  since  in all  of them  the commutativity is hidden,
    and the conjugation is either hidden or partially hidden \cite{bz,knutson0,knutson,pak0}.  This is in contrast with the fact that most
    of the symmetries are explicitly exhibited by simple means \cite{pak0}. By "{\em hidden}" or "{\em simple}" we are referring to the computational complexity  of the operations  needed to reveal such symmetries. Let $\rm
LR(\lambda/\mu,\nu)$ be the set of Littlewood-Richardson (LR for short)
tableaux \cite{lr} of shape $\lambda/\mu$ and content $\nu$. Then
$c_{\mu\;\nu}^{\lambda}$ counts the number of elements of this set.
 If one writes
$c_{\mu\;\nu}^{\lambda}=:c_{\mu\;\nu\,\lambda^\vee}$, with $\lambda^\vee$ the
complement partition of $\lambda$ regarding some rectangle
containing $\lambda$, the
Littlewood-Richardson coefficients
are invariant under the following action of $\mathbb Z_2\times S_3$:
the non--identity element of $\mathbb Z_2$ transposes simultaneously
$\mu$, $\nu$ and $\lambda^\vee$, and $S_3$ sorts $\mu$, $\nu$ and
$\lambda^\vee$ \cite{sottile}.

 The Berenstein-Zelevinsky
interpretation of the Littlewood-Richardson coefficients  (BZ
triangles for short) \cite{bz}
 manifests  all  the $S_3$-symmetries except the commutativity.
  Pak and Vallejo have defined  in \cite{pak0} bijections, which are
explicit linear maps, between  LR tableaux, Knutson-Tao hives
\cite{knutson0} and BZ triangles. These bijections  combined with
the symmetries of BZ triangles  give   all the $S_3$-symmetries
except the commutativity. The conjugation symmetry is also hidden in
BZ triangles.
 In \cite{post}, it is shown  that it can be
revealed from a bijection between web diagrams and BZ-triangles. On
the other hand, the Knutson-Tao-Woodward puzzles \cite{knutson}, the
most symmetrical objects, manifest only
 partially the conjugation symmetry through the
{\em puzzle duality}, {\em viz.}
$c_{\mu\;\nu\;\lambda}=c_{\nu^t\;\mu^t\;\lambda^t}$, since the
commutativity  is hidden. Interestingly, as we shall see,
  a similar {\em partial conjugation symmetry}, $c_{\mu\;\nu\;\lambda}=c_{\lambda^t\;\nu^t\;\mu^t}$,
   is obtained on LR tableaux through a simple bijection, denoted by $\blacklozenge$. In \cite{knutson, king1,king2} bijections  between hives and puzzles can be found.
 Recently, Purbhoo \cite{mosaic} introduced a new
 tool called
  mosaics, a square-triangle-rhombus tiling model with all the rhombi arranged  in the shape of a Young diagram in the corners of an hexagon.
 Mosaics are  in bijection with puzzles and with LR tableaux, and the operation  migration on mosaics,
 which correspond to some sequence of {\em jeu de taquin} operations on LR tableaux, reveals  the  hidden symmetries of puzzles and LR-tableaux.
  The  Thomas-Yong carton rule \cite{carton} is a recent $S_3$-symmetric rule but  the computational complexity of the resulted visual symmetry does not seem
   to be improved as it is based  on non trivial properties of {\em jeu de taquin}.

In \cite{pak},  a number of Young tableau commutative symmetry maps
are considered and it is shown that two of them are linear time
reducible to each other and to the Sch\"utzenberger involution.
(Subsequently  in \cite{DK2} and in \cite{az3} it has been
 shown that the two remaining ones are identical to the others.) In
this paper, we consider three Young tableau conjugation symmetry
maps that appeared in \cite{white, HS,sottile,za,az,az1} and one
shows that these three Young tableau conjugation symmetry maps and
the commutative symmetry maps, considered in \cite{pak}, are linear
time reducible to each other and to the Sch\"utzenberger involution.
In addition, as in the commutative case, the Young tableau
conjugation symmetry maps coincide. This answers a question posed by
Pak and Vallejo in \cite{pak}.

\subsection{Summary of the results}
The conjugation symmetry map is a bijection \cite{pak}
$$\varrho: LR(\lambda/\mu,\nu)\longrightarrow
LR(\lambda^t/\mu^t,\nu^t).$$
Let $T$ be
 a tableau and  $\widehat T$ its standardization.
 The
 Benkart-Sottile-Stroomer conjugation symmetry map \cite{sottile}, denoted by $ \varrho^{BSS}$, is
 the bijection
$$\begin{array}{cccc}
\varrho^{BSS}:LR(\lambda/\mu,\nu)&\longrightarrow&LR(\lambda^t/\mu^t,\nu^t)\cr
T&\mapsto&\varrho^{BSS}(T)=[ Y(\nu^t)]_K\cap [(\widehat{T})^t]_d
\end{array},$$
\noindent where $[ Y(\nu^t)]_K$ is the Knuth class  of all tableaux
with rectification  the Yamanouchi tableau $Y(\nu^t)$ of shape the
conjugate of $\nu$, and $[\widehat{T}^t]_d$ is the dual Knuth class
of all tableaux of shape $\lambda^t/\mu^t$
 with $Q$-symbol the   transpose of $\widehat{T}$. The image of $T$ by the
$BSS$-bijection is the unique tableau  of shape $\lambda^t/\mu^t$ in
both those two equivalence classes. Fulton showed in  \cite{ful}
that the White-Hanlon-Sundaram map $\varrho^{WHS}$ \cite{white,HS}
coincides with $\varrho^{BSS}$. Thus $\varrho^{BSS}(T)$ can be
obtained either by tableau-switching or by the White-Hanlon-Sundaram
transformation $\varrho^{WHS}$.

Given a totally ordered finite alphabet, let $\sigma_i$ denote the
reflection crystal operator acting on a subword over the alphabet
$\{i,i+1\}$, for all $i$ \cite{plaxique,Lot}, and let
$\sigma_0=\sigma_i\cdots\sigma_j\cdots\sigma_k$ be such that
$s_i\cdots s_j\cdots s_k$, with $s_{l}$ the transposition
$\left(l,l+1\right)$, is the longest permutation of $S_{\nu_1^t}$.
The  column reading word of $\varrho^{BSS}(T)$ is the Yamanouchi
word of weight $\nu^t$ whose
 $Q$-symbol  is the one given  by  the column reading word of $\widehat T^t$. The following transformation $\varrho_3$
\cite{za,az,az1,slc} makes clear the construction of that word and
affords a simple way to construct $\varrho^{BSS}(T)$
\def\Tscale{.75}
$$
\begin{array}{ccccccccccc}
\varrho_3:LR(\lambda/\mu,\nu)&&&\longrightarrow&&&
LR(\lambda^t/\mu^t,\nu^t)\cr \text{\scriptsize $T$ with  word
$w$}&&&&&& \text{\scriptsize $\varrho_3(T)$ with column  }\cr
 &&&&&&\text{\scriptsize word $(\sigma_0 w)^{*\,\diamond}$}\cr
 &&&&&&\cr
 {{\rm
T}=\tableau[Y]{~,~,1,1,1,1|~,1,2,2|2,3,3}}&
\overset{\text{$e$}}{\underset{\text{\scriptsize
reversal}}\rightarrow}& {{\rm
T}^e=\tableau[Y]{~,~,1,1,3,3|~,2,2,2|3,3,3}}&\underset{\text{\scriptsize
of $\lambda/\mu$}}{\overset{\text{\scriptsize transposition
}}\rightarrow}
&{\tableau[Y]{~,~,3|~,2,3|1,2,3|1,2|3|3}}&\rightarrow&
{\tableau[Y]{~,~,1|~,1,2|1,2,3|2,3|4|5}=\varrho_3(\rm T)}\cr
&&&&&&\cr
 w=1111221332&\overset{\text{\scriptsize$\sigma_0$}}\rightarrow&\sigma_0
w=3311222333&\underset{\text{the
word}}{\overset{\text{reverse}}\rightarrow}&
3332221133&\rightarrow&1231231245\cr &&&&&&\text{\scriptsize column
word of}\cr  &&&&&&\text{\scriptsize
$\varrho_3(T)=\varrho^{BSS}(T)$,}\end{array}$$
 \noindent where
 $*$ denotes the dualization of a word; and
$\diamond$  is the operator which transforms a Yamanouchi word of
weight $\nu$, into a Yamanouchi word of weight $\nu^t $,  by
replacing the subword $i^{\nu_i}$ with $12\dots\nu_i$, for all $i$.
The action of the operator $\diamond$ is extended to dual
Yamanouchi words by putting $(w^*)^{\,\diamond}:= w^{\diamond\,
*}$.  More precisely, the $\diamond$ operator is a
bijection between the Knuth classes of the Yamanouchi tableaux
$Y(\nu)$ and $Y(\nu^t)$, and also between the corresponding  dual
Yamanouchi tableaux. The reversal $e$ of a
 LR tableau can be computed
by the action of  $\sigma_0$ on its word.
 The image of a  LR or dual LR tableau $U$ under rotation of the
skew-diagram by $180 $ degrees, with the dualization $*$  of its
word is denoted by $U^\bullet$; and the image of $U$ under the
 rotation and transposition of the skew-diagram, with
the action of the operation $\diamond$ on its word is denoted by
$U^\blacklozenge$. Again $\bullet$ and $\blacklozenge$ are
involution maps. Then
$$\varrho_3(T)=T^{e\,\bullet\blacklozenge}= T^{\blacklozenge\bullet\,e}=T^{\bullet\blacklozenge\,e}\text{\;  and}$$ $$\text{$(\sigma_0
w)^{*\,\diamond}=(\sigma_0 w)^{\diamond\, *}=\sigma_0( w^{\diamond\,
*})$ is the column word of $T^{e\bullet\blacklozenge}= [
Y(\nu^t)]_K\cap [(\widehat{T})^t]_d$}.$$
In the  two next sections we shall develop the necessary machinery to show the above identities.
 Bijection $\scriptsize{\blacklozenge\bullet}$ appeared
originally in \cite{za} with a different formulation. In
\cite{az,az1} the  bijection $e$, defined differently and based on a
modified insertion, is composed with the last one to give $\rho_3$.
Here we stress the composition of $e\,\bullet$ with $\blacklozenge$.


 Following the ideas introduced in \cite{pak}, we  address, in  Section $4$, the
problem of studying  the computational cost of the  conjugation
symmetry map $\varrho^{BSS}$ utilizing what is known as Relative
Complexity, an approach based on reduction of combinatorial
problems. To this aim we use the version $\varrho_3$.
We consider only linear time reductions; since the bijections we consider
require subquadratic time the reductions have to preserve that.
 Let $\cA$ and $\cB$ be two possibly infinite sets
of finite integer arrays, and let $\delta : \cA \longrightarrow \cB$
be an explicit map between them. We say that $\delta$ has linear
cost if $\delta$ computes $\delta\(A\) \in \cB$ in linear time
$O\(\langle A \rangle \)$ for all $A \in \cA$, where $\langle A
\rangle$ is the bit--size of $A$. The transposition of the recording
matrix of a LR tableau is the recording matrix of a tableau of
normal shape. We have  then a linear map $\tau$ which defines a
bijection between tableaux of normal shape and LR tableaux
\cite{lee1,lee2,pak,ouch}. As the rotation map $\bullet$ and
 $\tau$ are  linear maps, so maps of linear
cost, the  reversal $ T^{e}$ of a LR tableau $T$ can be
linearly reduced  to the evacuation  $E$ of the corresponding
tableau $\tau(T)=P$ of normal shape, {\em i.e.} $\tau(P
^E)=T^{e\,\bullet}$. Additionally, in Algorithm \ref{alg}, it is proved  that the bijection
$\blacklozenge$, exhibiting the symmetry $c_{\mu\;\nu\;\lambda}=c_{\lambda^t\;\nu^t\;\mu^t}$, is  of linear cost.
 The following commutative scheme  shows that the conjugation
symmetry map $\varrho_3$, and therefore $\varrho^{BSS}$ and
$\varrho^{WHS}$, is linear equivalent to the Sch\"utzenberger
involution or evacuation map on tableaux of normal shape,

\begin{theorem}
The following commutative scheme holds
$$\begin{matrix}
T&\overset{\text{$e\,\bullet$}}\longleftrightarrow&T^{e\bullet}&\overset{\text{$\blacklozenge$}}\longleftrightarrow& {T^{e\bullet}}^\blacklozenge\\
{\tau}\updownarrow&&{\tau}\updownarrow\\
P&\underset{E}{\overset{\text{\scriptsize
evacuation}}\longleftrightarrow}&P^E.
\end{matrix}$$
\end{theorem}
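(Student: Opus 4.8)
The plan is to reduce the assertion to commutativity of the left-hand square, since the right-hand arrow merely records the application of $\blacklozenge$: by the identity $\varrho_3(T)=T^{e\bullet\blacklozenge}$ established earlier in the excerpt, the node $(T^{e\bullet})^\blacklozenge$ is exactly $\varrho_3(T)=\varrho^{BSS}(T)\in LR(\lambda^t/\mu^t,\nu^t)$, so no further commutativity is claimed there. The substantive content is therefore the single identity $\tau(P^E)=T^{e\bullet}$, equivalently $\tau(T^{e\bullet})=(\tau(T))^E=P^E$, which asserts that evacuation on tableaux of normal shape is transported by $\tau$ to reversal-followed-by-rotation on $LR$ tableaux. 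Since $\tau$ transposes recording matrices it is an involutive bijection between normal-shape tableaux and $LR$ tableaux, so the two readings of the square are equivalent and it suffices to prove one of them.

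First I would fix the data: to an $LR$ tableau $T$ of shape $\lambda/\mu$ and content $\nu$ I attach its recording matrix $M(T)$, whose two indices track respectively the content and the rows of the skew shape, and I recall that $\tau$ is \emph{defined} by $M(\tau(T))=M(T)^{\mathsf T}$, so that $P=\tau(T)$ is the unique normal-shape tableau with transposed recording matrix. The proof then rests on three descriptions. For evacuation I would use that $E$ reverses the reading word and complements the entries, so that on $M(P)$ it acts as a $180^\circ$ rotation, reversing both the row index and the content index simultaneously. For the $LR$ side I would use the descriptions already recalled in the text: the reversal $e$ is realized by the crystal operator $\sigma_0$ acting on the reading word, the crystal Sch\"utzenberger involution, which reverses the content index with complementation of the alphabet; while the rotation $\bullet$ turns the skew diagram through $180^\circ$ and dualizes the word, reversing the row index.

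With these in hand the square closes by bookkeeping. Transposition interchanges the row index and the content index of the recording matrix; evacuation reverses both of them at once; hence under $\tau$ the single involution $E$ factors into the two half-reversals realized on the $LR$ side by $\sigma_0$ (the reversal $e$, which reverses what becomes the content index) and by the dualizing skew-diagram rotation (the map $\bullet$, which reverses what becomes the row index). One must also check that the output has shape $\lambda/\mu$ and content $\nu$ and that both composites are bijections, which follows because each constituent is an involution and $\tau$ is a bijection, so that $\tau(P^E)$ and $T^{e\bullet}$ lie in the same set and agree entry by entry. I expect the main obstacle to be precisely this last matching carried out at the level of reading words rather than merely of matrices: one has to verify that $\sigma_0$ and the dualizing rotation $\bullet$ commute and that their composite reproduces exactly the entrywise $180^\circ$ rotation coming from $E$, keeping careful track of the interplay between Knuth and dual Knuth equivalence and of the alphabet complementation. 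This is exactly the machinery to be developed in the next two sections.
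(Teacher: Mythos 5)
You correctly reduce the theorem to the single identity $\tau(T^{e\bullet})=P^{E}$ (the right-hand arrow is indeed just the application of $\blacklozenge$, with no further commutativity claimed), but the mechanism you propose for proving that identity rests on a false premise: evacuation does \emph{not} act on recording matrices as a $180^{\circ}$ rotation. What a rotation of the recording matrix describes is the map $\bullet$ alone; evacuation is $P^{E}=(P^{\bullet})^{\rm n}$, i.e.\ rotation \emph{followed by jeu de taquin rectification}, and the rectification changes the recording matrix in a way that no index bookkeeping captures. Concretely, take the LR tableau $T$ of shape $(2,2)/(1)$ with rows $(\,\cdot\,,1)$ and $(1,2)$, so that $P=\tau(T)$ is the normal-shape tableau with rows $(1,2)$ and $(2)$. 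Then
$$M(P)=\begin{pmatrix}1&1\\0&1\end{pmatrix},\qquad P^{E}=\text{the tableau with rows }(1,1)\text{ and }(2),\qquad M(P^{E})=\begin{pmatrix}2&0\\0&1\end{pmatrix},$$
whereas the $180^{\circ}$ rotation of $M(P)$ is $\left(\begin{smallmatrix}1&0\\1&1\end{smallmatrix}\right)$ --- not even upper triangular; it is the recording data of $P^{\bullet}$, not of $P^{E}$. The same objection hits your description of $e$: $\sigma_{0}$ does send weight $\nu$ to $\nu^{*}$, but its effect on the recording matrix is not a reversal of the content index (in the same example $M(T^{e})=\left(\begin{smallmatrix}1&0\\0&2\end{smallmatrix}\right)$, while reversing the columns of $M(T)=\left(\begin{smallmatrix}1&0\\1&1\end{smallmatrix}\right)$ gives $\left(\begin{smallmatrix}0&1\\1&1\end{smallmatrix}\right)$). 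So the final verification you defer --- that $\sigma_{0}$ and $\bullet$ compose to ``the entrywise $180^{\circ}$ rotation coming from $E$'' --- would simply fail, because no such entrywise description of $E$ exists. Note also that if your premise were true, $E$ would be computable in linear time directly on the recording matrix, and the paper's main point --- that the conjugation symmetry maps are linear-time \emph{equivalent} to the genuinely nontrivial involution $E$ --- would be vacuous.

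The actual proof has to match the two hard operations against each other, and that is where the paper's machinery goes. First (Theorem \ref{plr*}), the reversal is computed on words: $T^{e}$ is the tableau of shape $\lambda/\mu$ with word $\sigma_{0}w$, because crystal reflection operators preserve $Q$-symbols. Second --- and this is the key step missing from your proposal --- under $\tau$ each operator $\sigma_{i}$ acting on (the word of) $T$ corresponds to the elementary jeu de taquin move $\theta_{i}$ acting on rows $i,i+1$ of $P$; iterating along a reduced word of the longest permutation gives $\tau(T^{e})=\theta_{0}P=P^{\rm a}$, the anti-normal form of $P$. Only at that point does matrix bookkeeping enter: $\tau$ commutes with $\bullet$ (transposition and rotation of matrices commute), and $P^{E}=(P^{\rm a})^{\bullet}$, whence $\tau(T^{e\bullet})=(P^{\rm a})^{\bullet}=P^{E}$, which is Theorem \ref{reduc}. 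In other words, the factorization that makes the square commute is $E=\bullet\circ{\rm a}$, with the hard half ${\rm a}$ matched to $e$ via the crystal/jeu-de-taquin correspondence and only the linear half $\bullet$ handled at the level of recording matrices; your argument in effect assumes both halves are linear matrix operations, which is precisely what is false.
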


\begin{theorem} The conjugation symmetry maps  $\varrho^{WHS}$, $\varrho^{BSS}$ and $\varrho_{3}$ are
identical, and linear time equivalent with the Sch\"utzenberger
involution $E$ and with the reversal map $e$.

\end{theorem}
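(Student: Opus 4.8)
The plan is to separate the statement into two independent parts: the identity $\varrho^{WHS}=\varrho^{BSS}=\varrho_3$, and the linear-time equivalence of this common map with $E$ and with $e$. The first part is essentially a matter of reading off words, while the second is a routing argument built on the commutative square of the previous theorem.

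For the identity I would begin with Fulton's theorem, which already gives $\varrho^{WHS}=\varrho^{BSS}$, so that only $\varrho_3=\varrho^{BSS}$ remains. Using the machinery of the two preceding sections I would compute the column reading word of $\varrho_3(T)=T^{e\bullet\blacklozenge}$ to be $(\sigma_0 w)^{*\,\diamond}$, where $w$ is the word of $T$, and then verify the two defining properties of $\varrho^{BSS}(T)$: that $(\sigma_0 w)^{*\,\diamond}$ is a Yamanouchi word of weight $\nu^t$, placing $\varrho_3(T)$ in the Knuth class $[Y(\nu^t)]_K$, and that its $Q$-symbol is the transpose of that of $\widehat T$, placing $\varrho_3(T)$ in the dual Knuth class $[(\widehat T)^t]_d$. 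Since $\varrho_3(T)$ has shape $\lambda^t/\mu^t$ and lies in $[Y(\nu^t)]_K\cap[(\widehat T)^t]_d$, and $\varrho^{BSS}(T)$ is by definition the unique tableau of that shape in this intersection, the two maps agree.

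For the equivalence I would exploit that $\tau$, $\bullet$ and $\blacklozenge$ are all of linear cost, with $\bullet$ and $\blacklozenge$ involutions, and that $\tau$ is a linear bijection intertwining $e\bullet$ with $E$. Writing $P=\tau(T)$, the square of the previous theorem gives $T^{e\bullet}=\tau(P^E)$ and hence $\varrho_3(T)=\bigl(\tau(P^E)\bigr)^\blacklozenge$. To reduce $\varrho_3$ to $E$ one computes $P=\tau(T)$, calls the evacuation routine once to obtain $P^E$, and finishes with $\tau$ and $\blacklozenge$; to reduce $E$ to $\varrho_3$ one sets $T=\tau(P)$, calls $\varrho_3$ once to obtain $(T^{e\bullet})^\blacklozenge$, applies $\blacklozenge$ to recover $T^{e\bullet}$, and then $\tau$ to return $P^E$. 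In both directions every step other than the single oracle call is linear, so $\varrho_3$ and $E$ are linear time equivalent. The reversal $e$ is treated identically through the linear involution $\bullet$: since $T^e=\bigl(\tau(P^E)\bigr)^\bullet$ reduces $e$ to $E$ and $P^E=\tau\bigl((T^e)^\bullet\bigr)$ reduces $E$ to $e$, the maps $e$ and $E$ are linear time equivalent as well. Transitivity of linear reductions then places $\varrho^{WHS}=\varrho^{BSS}=\varrho_3$, $E$ and $e$ in a single linear-time equivalence class.

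I expect the genuine obstacle to sit entirely in the first part, namely the explicit control of $(\sigma_0 w)^{*\,\diamond}$: proving that applying $\sigma_0$, the dualization $*$ and the operator $\diamond$ in turn really produces a Yamanouchi word of weight $\nu^t$ with $Q$-symbol $(\widehat T)^t$ is where the crystal-operator bookkeeping of the preceding sections is indispensable. Once that identity is in hand, the linear-time equivalences are a routine composition of the square of the previous theorem with the linear-cost involutions $\bullet$ and $\blacklozenge$, the only point needing attention being to arrange each reduction so that precisely one oracle call is made and all remaining operations are of linear cost.
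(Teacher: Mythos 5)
Your proposal is correct and follows essentially the same route as the paper: the identity part via Fulton's result plus the computation of the column word $(\sigma_0 w)^{*\,\diamond}$ of $T^{e\bullet\blacklozenge}$ and the crystal-operator $Q$-symbol argument placing it in $[Y(\nu^t)]_K\cap[(\widehat T)^t]_d$ (the paper's Theorem~\ref{plr*} and its corollary), and the equivalence part via the commutative square through $\tau$ together with the linear cost of $\tau$, $\bullet$ and $\blacklozenge$ (the paper's Theorem~\ref{reduc} and Algorithm~\ref{alg}), with one oracle call in each direction exactly as the paper does.
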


 We may now extend the list of linear equivalent Young tableau maps established in \cite{pak}, Section $2$,
 Theorem $1$.

\begin{theorem}{\em \cite{pak}}
The following maps are linearly equivalent:

$(1)$  RSK correspondence.

$(2)$  Jeu de taquin map.

$(3)$  Littlewood--Robinson map.

$(4)$  Tableau switching map $s$.

$(5)$  Evacuation (Sch\"utzenberger involution) $E$ for normal
shapes.

$(6)$  Reversal $\,e$.

$(7)$ First  fundamental symmetry map.

$(8)$ Second fundamental symmetry map.
\end{theorem}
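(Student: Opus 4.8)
The plan is to exhibit a strongly connected network of linear-time reductions among the eight maps, so that mutual linear equivalence follows by composing reductions along paths of the network. I would designate the jeu de taquin rectification map $(2)$ as the hub and first recall its classical linear-time equivalence with the RSK correspondence $(1)$: the insertion tableau $P(w)$ of a word $w$ equals the rectification of any skew tableau whose reading word is $w$, and conversely the rectification of a skew tableau $T$ is obtained by applying RSK to the reading word of $T$. Since writing down a reading word and reading it back are operations of linear cost, each of $(1)$ and $(2)$ reduces to the other with a single oracle call plus linear-cost pre- and post-processing. The guiding principle throughout is that a reduction remains linear precisely when it invokes the target map only $O(1)$ times on inputs of bit--size $O\(\langle A\rangle\)$, flanked by interface maps of linear cost.

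Next I would attach the remaining maps to this hub. The tableau switching map $(4)$ applied to a pair $S\subset T$ amounts to rectifying the inner filling through the outer one while recording the displaced cells; this is realized by a bounded number of rectification calls together with linear-cost bookkeeping, and conversely an ordinary slide sequence is the switching of a tableau past a single chain, so $(4)$ is linearly equivalent to $(2)$. The evacuation map $(5)$ for normal shapes is, by Sch\"utzenberger's description, the rectification of the $180^{\circ}$-rotated complemented tableau; since rotation and complementation are linear-cost operations, $E$ reduces to one rectification and back. The reversal $e$ of $(6)$ is then handled exactly as in the discussion preceding the commutative scheme: the recording-matrix transposition $\tau$ and the rotation $\bullet$ are linear maps, and the identity $\tau\(P^{E}\)=T^{e\,\bullet}$ reduces $e$ to a single call of $E$ with linear-cost glue, and reciprocally. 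The Littlewood--Robinson map $(3)$ and the two fundamental symmetry maps $(7)$--$(8)$ are, in the same spirit, expressed through a bounded number of rectifications or switchings composed with linear-cost standardizations, rotations, complementations and transpositions of recording matrices.

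The main obstacle I anticipate is the complexity bookkeeping rather than any new combinatorial identity: for each of $(3)$, $(7)$ and $(8)$ one must verify that the combinatorial definition, which is naively a sequence of $\Theta(n)$ elementary slides, can in fact be reorganized into $O(1)$ calls to a full rectification or switching, because a reduction calling the hub map a linear number of times would be only polynomial, not linear. The delicate point is therefore to package each symmetry map as a single macroscopic rectification or switching with purely linear-cost interface maps on either side, and to confirm that every such interface map---namely $\tau$, $\bullet$, standardization, complementation and $180^{\circ}$ rotation---indeed runs in time $O\(\langle A\rangle\)$ on its input. Once each of the eight maps is shown to reduce to the hub with $O(1)$ oracle calls and the hub reduces back to each, the reduction network is strongly connected and all eight maps are linearly equivalent, as claimed.
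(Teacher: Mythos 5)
First, a point of reference: the paper does not prove this statement at all --- it is quoted, with attribution, from Pak and Vallejo (\cite{pak}, Section 2, Theorem 1), and the present paper's own contribution is the Corollary that follows, which appends the conjugation symmetry maps $(9)$--$(12)$ to the list. Your proposal must therefore be judged against the proof in \cite{pak}. Its overall architecture --- a strongly connected network of reductions, each consisting of $O(1)$ oracle calls flanked by linear-cost interface maps such as $\tau$, $\bullet$, standardization and complementation --- is indeed the right framework, and it is the same one recalled in Section 4 of this paper. The spokes you actually construct are also sound: $E$ reduces to rectification because $T^E=(T^{\bullet})^{\rm n}$, the reversal $e$ reduces to $E$ through $\tau(P^E)=T^{e\,\bullet}$, and RSK and rectification simulate one another through reading words (modulo some care with recording tableaux, which you elide).

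However, there are two genuine gaps, and together they amount to essentially the entire content of the theorem. (i) The maps $(3)$, $(7)$, $(8)$ --- the Littlewood--Robinson map and the two fundamental (commutativity) symmetry maps --- are handled by the single sentence that they are ``in the same spirit'' a bounded number of rectifications or switchings with linear-cost glue. Proving exactly that, in both directions, is the technical core of \cite{pak}; it cannot be assumed. Indeed, the fact that the first and second fundamental symmetry maps even define the same correspondence was only settled separately (\cite{DK2}, as the paper's Corollary records), which shows these maps are not transparently ``a few slides repackaged.'' (ii) Linear \emph{equivalence} needs incoming reductions into every node, and you assert ``the hub reduces back to each'' without constructing any such reduction. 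These reverse directions are not formal inversions but require genuine combinatorial identities: for example, to compute the rectification of a skew tableau $T$ of shape $\lambda/\mu$ from an evacuation oracle one must, say, pad $T$ above a standard filling of $\mu$ and exploit the duality by which $E$ exchanges restriction to initial and to final value segments, so that $O(1)$ calls of $E$ plus linear glue return $T^{\rm n}$; reducing rectification to the fundamental symmetry maps is harder still. Consequently your closing diagnosis --- that the obstacle is ``complexity bookkeeping rather than any new combinatorial identity'' --- is exactly backwards: the linear-cost glue is the easy part, and the identities allowing each map to be computed by $O(1)$ macroscopic calls of another are precisely what \cite{pak} supplies and what your proposal leaves unproved.
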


\begin{corollary}
The following maps are linearly equivalent:

$(1)$  RSK correspondence.

$(2)$  Jeu de taquin map.

$(3)$  Littlewood--Robinson map.

$(4)$  Tableau switching map $s$.

$(5)$  Evacuation (Sch\"utzenberger involution) $E$ for
normal shapes.

$(6)$  Reversal $\,e$.

$(7)$ First  fundamental symmetry map.

$(8)$ Second fundamental symmetry map.

$(9)$ Third fundamental symmetry map.

$(10)$  $\varrho^{WHS}$ conjugation symmetry map.

$(11)$ $\varrho^{BSS}$ conjugation symmetry map.

$(12)$  $\varrho_3$ conjugation
symmetry map.

In particular,   first and  second fundamental symmetry maps are
identical \cite{DK2}; first and  third fundamental symmetry maps are
identical \cite{az3}; $\varrho^{WHS}$ and  $\varrho^{BSS}$ are
identical conjugation symmetry maps \cite{ful, sottile}, and the
same happens with $\varrho^{BSS}$ and $\varrho_3$.
\end{corollary}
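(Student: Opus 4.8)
The plan is to build the statement on top of Theorem 3, Theorem 2, and the two cited identity results, using nothing beyond the transitivity of linear equivalence. Theorem 3 already supplies that items $(1)$ through $(8)$ form a single linear-equivalence class, so the task reduces to inserting $(9)$ through $(12)$ into that class and recording the coincidences. The genuine difficulty has been absorbed into Theorem 2; what remains for the corollary is essentially bookkeeping.

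First I would dispose of the third fundamental symmetry map $(9)$. By \cite{az3} it is identical, as a function, to the first fundamental symmetry map $(7)$, and by \cite{DK2} the first and second fundamental symmetry maps also coincide; so all three agree on every input. Two maps that agree on every input are linearly reducible to each other via the identity, which costs nothing, so $(9)$ is linearly equivalent to $(7)$ and thereby enters the class of items $(1)$ through $(8)$.

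Next I would handle the conjugation symmetry maps $(10)$, $(11)$, $(12)$, where the real content sits in Theorem 2. That theorem gives two things at once: that $\varrho^{WHS}$, $\varrho^{BSS}$ and $\varrho_3$ are one and the same map, and that this map is linearly equivalent to the Sch\"utzenberger involution $E$ and to the reversal $e$. Since $E$ and $e$ are items $(5)$ and $(6)$ of the class already established, I would invoke transitivity of linear equivalence to pull $(10)$, $(11)$, $(12)$ into that class. The displayed identities $\varrho^{WHS}=\varrho^{BSS}$ (from \cite{ful,sottile}) and $\varrho^{BSS}=\varrho_3$ (proved in this paper) are precisely the identical-function assertions recorded at the end of the corollary, and each again yields linear equivalence for free.

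The one subtlety I expect to be the main obstacle is ensuring that each linear equivalence is realised through a fixed number of reductions, so that the composite pre- and post-processing stays of linear cost; a growing number of chained reductions could in principle destroy linearity. Because the chains involved use only finitely many of the linear maps $\tau$, $\bullet$, $\diamond$ together with the reductions furnished by Theorems 1 and 2, no blow-up occurs and linearity is preserved throughout. With this in place, all twelve maps lie in a single linear-equivalence class, and the coincidences stated in the corollary are read off directly from the cited identities and from Theorem 2.
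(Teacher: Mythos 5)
Your proposal is correct and follows essentially the same route as the paper: the corollary is obtained by gluing the Pak--Vallejo list (Theorem 3) to Theorem 2 via transitivity of linear equivalence (the paper's Proposition in Section 4 guarantees costs compose multiplicatively, so finitely many chained reductions preserve linearity), with items $(9)$--$(12)$ entering through the cited identities and the equivalence of the conjugation maps with $E$ and $e$. The paper gives no separate proof of the corollary beyond exactly this bookkeeping, so your argument matches its intent.
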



\section{Preliminaries}

\subsection{ Young diagrams and transformations}

A {\it partition} (or normal shape) $\lambda$ is a sequence of
non--negative integers
$\lambda=(\lambda_1,\lambda_2,\ldots,\lambda_{\ell}),$ with
$\lambda_1\geq\lambda_2\geq\cdots\geq\lambda_{\ell}\ge 0$. The
number of parts is $\ell(\lambda)=\ell$  and the weight is
$|\lambda|=\lambda_1+\lambda_2+\cdots+\lambda_{\ell}$. (For
convenience we allow zero parts.) The Young diagram  of $\lambda$ is
the collection of boxes $ \{(i,j)\in\mathbb{Z}^{2}|\; 1\leq
i\leq\ell, 1\leq j\leq \lambda_i\}$. The English convention is
adopted in drawing such a diagram. Throughout the paper we do not
make distinction between a partition $\lambda$ and its Young diagram
\cite{Pak1}. Given partitions $\lambda,\mu$, we say that
$\mu\subseteq\lambda$ if $\mu_i\leq\lambda_i$ for all $i>0$.
 If $(r^{\ell})$ is a $r\times \ell$ rectangle containing
$\lambda$, 
the {\em complement}  of
$\lambda$ regarding that rectangle is the partition
$\lambda^{\vee}=(r-\lambda_{\ell},\dots,r-\lambda_1)$.
We define $\lambda^t$ the
{\em conjugation} or {\em transposition} of $\lambda$ as the  image
of $\lambda$ under the transposition $(i,j)\rightarrow (j,i)$. For
example, let $r=4$ and $\ell=3$. The Young diagram of $\lambda=(3,2,2)$ and its transpose
$\lambda^t=(3,3,1)$
 are  depicted below; and
$\lambda^\vee=(2,2,1)$,
$(\lambda^t)^\vee=(\lambda^{\vee})^t=(3,2,0)$ are depicted by dotted
boxes
\def\Tscale{.9}
$$\begin{matrix}
{\lambda=\tableau[Y]{,,|,|,}}\,,\quad&{\tableau[Y]{,,,\centerdot|,,\centerdot,\centerdot|,,\centerdot,\centerdot|}=\lambda^\vee}\,,\quad&
{\lambda^t=\tableau[Y]{,,|,,||}}\,,\quad&{\tableau[Y]{,,|,,|,\centerdot,\centerdot|\centerdot,\centerdot,\centerdot}={(\lambda^\vee)}^t}.
\end{matrix}$$
A skew-diagram (skew-shape)
$\lambda/\mu$ is $\{(i,j)\in\mathbb{Z}^{2}|\ts (i,j)\in \lambda,
(i,j)\notin\mu\}$ the collection of boxes in $\lambda$ which are not
in $\mu$.
   When $\mu$ is the null partition, the
skew-diagram $\lambda/\mu$ equals the Young diagram  $\lambda$. The
number of boxes in $\lambda/\mu$ is $|\lambda/\mu|=|\lambda|-|\mu|$.
The {\it transpose} (conjugate shape) $(\lambda/\mu)^t$  is the
skew-diagram $\lambda^t/\mu^t$ obtained by transposing the
skew-diagram $\lambda/\mu$.
 Let $r=\lambda_1$. The {\it rotation}
(dual shape) $(\lambda/\mu)^*$  is the image of $\lambda/\mu$ by
rotation of $180$ degrees, or the image of $\lambda/\mu$ under
$(i,j)\longrightarrow (\ell-i+1, r-j+1)$.
 Equivalently
$(\lambda/\mu)^{*}=\mu{}^\vee/\lambda^\vee$. In particular,
$\lambda^*$ is the skew-diagram $r^\ell/\lambda^\vee$. The dual
conjugate shape $(\lambda/\mu)^\diamondsuit$ is the image of
$\lambda/\mu$  under $(i,j)\longrightarrow (r-j+1,\ell-i+1)$. The map
$\diamondsuit$ is the composition of the transposition with the
rotation maps $\diamondsuit=* t=t *$. In particular,
$\lambda^\diamondsuit=\ell^r/(\lambda^\vee)^t$. For instance, if
$\mu=(2)\subset\lambda=(4,3,1)$, we have

\noindent\def\Tscale{.9}
$\begin{array}{ccccccc}
{\text{$\lambda/\mu$}=\tableau[Y]{~,~,~,,|~,,,|~,|}}&{ \text{
$(\lambda/\mu)^t$}=\tableau[Y]{~,~,,|~,~,|~,,|~,|}}& {\ts\text{
\ts$(\lambda/\mu)^*$}=\tableau[Y]{~,~,~,~,|~,~,,,|~,,}}&
{\text{\ts$(\lambda/\mu)^\diamondsuit$}=\tableau[Y]{~,~,~,|~,~,,|~,~,|~,,|}}.
\end{array}$

\subsection{Tableaux and words}

The Littlewood--Richardson (LR for short) numbering (reading) of the
boxes of a skew-diagram $\lambda/\mu$ is an assignment of the labels
$1,2,\ldots$ which sorts the boxes of $\lambda/\mu$ in increasing
order from right to left along each row, starting in the top row and
moving downwards; and the column LR numbering of the boxes sorts in
increasing order, from right to left along each column, starting in
the rightmost column and moving downwards. Analogously the reverse
LR numbering  and the column LR numbering of $\lambda/\mu$ are
defined.

\begin{example} If $\lambda/\mu=\tableau[Y]{~,~,,|,,,}$, the
LR-numbering, column LR-numbering and the corresponding reverse
LR-numberings of $\lambda/\mu$ are, respectively,
$$\begin{array}{ccccccccc}{\,\,\setlength{\arraycolsep}{0.1cm}\renewcommand{\arraystretch}{0.5}\begin{matrix}
  & &2&1\\
6&5&4&3\\
\end{matrix}}&&{\,\,\setlength{\arraycolsep}{0.1cm}\renewcommand{\arraystretch}{0.5}\begin{matrix}
    &&3&1\\
6&5&4&2\\
\end{matrix}}&&
{\,\,\setlength{\arraycolsep}{0.1cm}\renewcommand{\arraystretch}{0.5}\begin{matrix}
&&5&6\\
1&2&3&4\\
\end{matrix}}&&{\setlength{\arraycolsep}{0.1cm}\renewcommand{\arraystretch}{0.5}\begin{matrix}
  & &4&6\\
1&2&3&5\\
\end{matrix}}.\end{array}$$

\end{example}
Clearly, the column LR-numbering of $\lambda/\mu$ is the
LR-numbering of $(\lambda/\mu)^\diamondsuit$, and the reverses of
LR-numbering and column LR-numbering of $\lambda/\mu$ are,
respectively, the LR-numbering of ${(\lambda/\mu)}^*$ and
${(\lambda/\mu)}^t$.

 A {\it Young tableau} $ T$ of shape $\lambda/\mu$ is  a filling
of the boxes of the  skew-diagram $\lambda/\mu$ with positive
integers in $\{1,\dots,t\}$ which is increasing in columns from top
to bottom and non-decreasing in rows from left to right. When $\mu$
is the empty partition we say that $ T$ has normal shape $\lambda$.
 The word $w( T)$ of a Young tableau $ T$ is the
sequence obtained by reading the entries of $ T$ according to its
LR numbering, that is, reading right-to-left the rows of $ T$,
from top to bottom. The column word $w_{col}(T)$ is the word
obtained according the column LR numbering. The weight of $T$ is the
weight of of its word.
Denote by $ YT(\lambda/\mu,m)$ the set of Young tableaux of shape
$\lambda/\mu$ and weight $m=(m_1,\ldots,m_t)$.
\begin{example}\label{yamanouchi} $
T=\tableau[Y]{~,~,1,1,1,1|~,1,2,2|2,3,3}$, $w(T)=1111221332$ and
$w_{col}(T)=1112123132$.
\end{example}
A Young tableau with $s $ boxes is {\it standard} if it is filled
with $\{1,\dots,s\}$  without repetitions. Given a tableau $
T$ of weight $m$, the {\it standardization} of $T$, denoted by $
\widehat{T}$,  
is obtained by replacing, west to east, the letters $1$ in $\rm T$
with $1,2,\ldots,m_1$; the letters $2$  with $m_1+1,\ldots,m_1+m_2$;
and so on.  The standardization $\widehat w$ of a word $w$ is
defined accordingly, from right to left. For instance, the
standardization of the tableau $ T$ in the previous example is $
\widehat{T}=\tableau[Y]{~,~,2,3,4,5|~,1,7,8|6,9,10},$  and
$\widehat{ w(T)}:=w(\widehat T)=54328711096$. If $w=w_1 w_2\dots
w_s$ is a word  and $\alpha$ is a permutation in the symmetric group
$\mathcal{S}_{s}$, define $\alpha w= w_{\alpha(1)}\dots
w_{\alpha(s)}$. In the case $T$ is standard we have
$w_{col}(\widehat T)= rev\, w({\widehat T}^t)$, with $rev$ the
reverse permutation.

A Young tableau $ T$ is said a {\it Littlewood--Richardson} (LR
for short) tableau if its word, when read from the beginning to any
letter, contains at least as many letters $i$ as letters $i+1$, for
all $i$. More generally, a word such that every prefix satisfies this
property is called a {\it lattice permutation} or  a Yamanouchi
word. Notice that the column word of a LR--tableau is also a
Yamanouchi word of the same weight.
Denote by ${ LR}(\lambda/\mu,\nu)$ the set of LR tableaux of
shape $\lambda/\mu$ and weight $\nu$. When $\mu=0$ we get the
Yamanouchi tableau $Y(\nu)$,  the unique tableau of shape and weight
$\nu$.  In Example \ref{yamanouchi}, $T$  is a LR tableau with
Yamanouchi word $w(T)=1111221332$ and  column word
$w_{col}(T)=1112123132$.

There is an one--to--one correspondence between Yamanouchi words of
weight $\nu$ and standard tableaux of shape $\nu$. Let $w=w_1
w_2\cdots w_s$ be  a Yamanouchi word and put the number $k$ in
the $w_k$th row of the diagram $\nu$. The  labels of the $i$th row
are the $k$'s such that $w_k=i$, thus the length is  $\nu_i$ and the
shape is $\nu$. We denote this standard tableau by $U(w)$.
 In Example \ref{yamanouchi},  $w=1111221332$,
 and $U(w)=\tableau[Y]{1,2,3,4,7|5,6,10|8,9}$ where the entries of the $i$th row are the positions  of the $i$'s in the LR reading of $T$ .

\subsection{Matrices and tableaux}

Given $T\in Y(\lambda/\mu,m)$,
let $M=(M_{ij})_{1\le i\le \ell(\lambda),1\le j\le t}$ be a matrix
with non--negative entries such that
$M_{ij}$ is the number of $j's$ in the $i$th row of $T$, called
the recording matrix of $T$ \cite{lee1,lee2,pak}.
 The recording matrix of a tableau of normal shape is an upper
triangular matrix, and the recording matrix of an LR tableau is a
lower triangular matrix. Thus we have an one--to--one correspondence
between LR tableaux and tableaux of normal shape as follows.
Considering $T$ in Example \ref{yamanouchi}, the recording matrix of
$T$ is $M=\left(\begin{array}{cccccccc}
4&0&0\\
1&2&0\\
0&1&2
\end{array}\right)$. On the other hand, the transposition  $M^t=\left(\begin{array}{cccccccc}
4&1&0\\
0&2&1\\
0&0&2
\end{array}\right)$ encodes the tableau
 $B=\tableau[Y]{1,1,1,1,2|2,2,3|3,3}$ of normal shape $\nu$ and
weight $\lambda-\mu$.  For two Young diagrams $\mu$ and $\nu$,
define  $\nu\circ
\mu=(\nu_1+\mu_1,\ldots,\nu_1+\mu_\ell,\nu_1,\ldots,\nu_r)/(\mu_1+\nu_1,\ldots,\mu_\ell+\nu_1)$,
$\ell=\ell(\mu)$, $r=\ell(\nu)$. Then with $\mu=(1)$, $B\circ
Y(\mu)=\tableau[Y]{~,~,~,~,~,1|1,1,1,1,2|2,2,3|3,3}\in
LR(\nu\circ\mu,\lambda).$ Given partitions $\lambda$, $\mu$, $\nu$
such that
 $|\lambda|=|\mu|+|\nu|$, define
 $CF(\nu,\mu,\lambda)=\{B\in YT(\nu,\lambda-\mu):B\circ Y(\mu)\in
 \rm LR(\nu\circ\mu,\lambda)\}$ \cite{pak}.
  The map $\tau:LR(\lambda/\mu, \nu)\rightarrow CF(\nu,\mu,\lambda)$
  such that $\tau(M)$ is the tableau of normal shape with recording
  matrix $M^t$, where $ M$ is the recording matrix of $T$, is a bijection. Taking
  again Example \ref{yamanouchi}, we have $\tau(T)=B$.

\subsection{Rotation and transposition of LR tableaux }
Given an integer $i$ in $\{1,\ldots,t\}$,  let
$i^*:=t-i+1$.
Given a word $w=w_1w_2\cdots w_{s}$, over the alphabet
$\{1,\ldots,t\}$, of weight $m=(m_1,\ldots,m_t)$,
$w^*:=w_{s}^*\cdots w_2^*w_1^*$ is the {\em dual word}
 of $w$ and  $m^*=(m_t,\ldots,m_1)$  its weight.   Indeed $w^{**}=w$.
 A dual Yamanouchi word is a word whose dual word is Yamanouchi.
  Given a Young tableau $\rm T$ of shape $\lambda/\mu$ and weight
$(m_1,\ldots,m_t)$,   ${\rm T}^\bullet$ denotes the Young tableau of
shape $(\lambda/\mu)^*$ and weight $m^*$, obtained from $\rm T$ by
replacing each entry $i$ with  $i^*$, and then rotating the result
by 180 degrees. The word of ${\rm T^\bullet}$ is $w(\rm T)^*$, and
${\rm T}^{\bullet\bullet}=\rm T$.
 A dual LR tableau is a
tableau whose word is a dual Yamanouchi word. $\rm
LR(\lambda/\mu,\nu^*)$ denotes the set
 of dual LR tableaux of shape $\lambda/\mu$ and weight
$\nu^*$, and  is the image of $\rm LR((\lambda/\mu)^*,\nu)$ under
the rotation map $\bullet$. Thus the rotation map $\bullet$ defines
a bijection between $\rm LR((\lambda/\mu)^*,\nu^*)$ and $\rm
LR(\lambda/\mu,\nu)$. Given a Yamanouchi word $w$ of weight $\nu$,
define the standard tableau
 $U(w^*)$ of  shape $\nu^*$ such that the label $k$ is in
row $i$ if and only if $w_{s-i+1}=k^*$. Thus $U(w^*)=U(w)^\bullet$
and this affords a bijection between dual Yamanouchi words of weight
$\nu^*$ and  standard tableaux  of  shape $\nu^*$. The rotation map
$\bullet$ is a linear map:
 $M=(M_{ij})$ is the
recording matrix of $T$ if and only if  the recording matrix of
$T^\bullet$ is $(M_{s+1-i,t-j+1})$.

There is another natural bijection, denoted by $\blacklozenge$,
between LR tableaux  of conjugate weight and   dual conjugate shape,
 see  \cite{za,az,az1}. Given a Yamanouchi word $w$ of weight
$\nu=(\nu_1,\dots,\nu_t)$, write $\nu^t=(\nu_1^t,\dots,\nu_k^t)$ and
observe that $w$ is a shuffle of the words $12\dots\nu^t_i$ for all
$i$, and its dual word is a shuffle of the words $t\,t-1\,\cdots
t-\nu_i^t+1$, for all $i$. Thus, we define $w^\diamond$ as the
Yamanouchi word of weight $\nu^t$ obtained by replacing the subword
 consisting only on the letters $i$
     with the subword $12\cdots \nu_i$, for each $i$. The operation $\diamond$ is defined
     on dual Yamanouchi words  by ${w^*}^\diamond:={w^\diamond}^*=$, giving rise to a dual Yamanouchi word of weight $\nu^{*t}$.
     The word ${w^\diamond}^*$ can be obtained in just only one
     step: replace the subword of $w$
 consisting only on the letters $i$
     with the subword $\nu_1\,\nu_1-1\cdots \nu_1-\nu_i+1$, for all
     $i$.
Clearly,  $U(w^\diamond)=U(w)^t$ is of shape $\nu^t$, and
      $U(w^{*\diamond})=U(w)^{\bullet t}$ is of shape $\nu^*$.
Given ${\rm T}\in {\rm LR}(\lambda/\mu,\nu)$ (${\rm
LR}(\lambda/\mu,\nu^*)$) with word $w$, define ${\rm
T}^{\blacklozenge}$ as the LR tableau of shape
$(\lambda/\mu)^\diamondsuit$ and weight $\nu^t$ obtained from $\rm
T$ by replacing the word $w$ with $w^{\diamond}$, and then rotating
the result by 180 degrees and transposing. Then
${\scriptsize{\blacklozenge}}:{\rm LR}(\lambda/\mu,\nu) (  {\rm
LR}(\lambda/\mu,\nu^*))\longrightarrow {\rm
LR}((\lambda/\mu)^{\diamondsuit},\nu^t){\rm
LR}((\lambda/\mu)^{\diamondsuit},\nu^{*t})$ is a bijection such that
$T^{\scriptsize{\blacklozenge}}$ has  column word $w^\diamond$ and
 $\rm T^{\scriptsize{\blacklozenge\blacklozenge}}=T$. Since
$\scriptsize{\blacklozenge\bullet=\bullet\blacklozenge}$,
$T^{\blacklozenge\bullet}=T^{ \bullet\blacklozenge}\in{\rm
LR}((\lambda/\mu)^t,\nu^{t*})$ (${\rm
LR}((\lambda/\mu)^{t},\nu^{t})$) has column word $w^{*\diamond}$.

\begin{example} $\rm
    T=\tableau[Y]{~,~,1,1|~,1,2,2|1,3}$ is a LR tableau
   with word $w=1122131$ of weight $\nu=(4,2,1)$. Then
    ${\rm
    T}=\tableau[Y]{~,~,1,1|~,1,2,2|1,3}\ts\overset{\text{replace }w({\rm T})}{\underset{\text{by }w({\rm T})^{\diamond}}
    {\longleftrightarrow}}\ts
    \tableau[Y]{~,~,2,1|~,3,2,1|4,1}\ts\overset{\text{rotate}}{\underset{\text{transpose}}{\longleftrightarrow}}\ts
    \tableau[Y]{~,1,1|~,2,2|1,3|4}={\rm T}^{\blacklozenge}{\underset{\text{$\bullet$}}{\longleftrightarrow}}\ts
    \tableau[Y]{~,~,1|~,2,4|3,3|4,4}={\rm T}^{\blacklozenge\bullet}\;.$
$\rm T^{\blacklozenge}$ is a LR tableau with
 shape $(\lambda/\mu)^{\diamondsuit}$ and column word $w^\diamond=1212314$ of weight $\nu^t$.
${\rm T}^{\blacklozenge\bullet}$ is a dual LR tableau with
 shape $(\lambda/\mu)^{t}$  and column word $w^{\diamond
 *}=1423434$ of weight $\nu^{t*}$,
  where $U(w)=\tableau[Y]{1,2,5,7|3,4|6}$,
  $U(w^\diamond)=\tableau[Y]{1,3,6|2,4|5|7}=U(w)^t$, and $U(w^{\diamond *})=\tableau[Y]{~,~,1|~,~,3|~,4,6|2,5,7}=U(w)^{\bullet t}$.

\end{example}


\section{Conjugation symmetry maps}
\subsection{Knuth equivalence and dual Knuth equivalence}

Whenever partitions $\nu\subset\mu\subset\lambda$, we say that
    $\lambda/\mu$ extends $\mu/\nu$. An {\it inside corner} of
    $\lambda/\mu$ is a box in  the diagram $\mu$ such that the boxes below and to the right are not in $\mu$.
    When a box extends $\lambda/\mu$, this box is called an
    {\it outside corner}.
Let $\rm T$ be a Young tableau and let $b$ be an inside corner for $
T$. A {\it contracting slide} \cite{schu,sottile} of $ T$ into the
box $b$ is performed by moving the empty box at $b$ through $\rm T$,
successively interchanging it with the neighboring integers to the
south and east according to the following rules:
$(i)$ if the empty box has only one neighbor, interchange with that
neighbor;
    $(ii)$ if it has two unequal neighbors, interchange with the
    smaller one; and
    $(iii)$ if it has two equal neighbors, interchange with that one to
    the south.
The empty box moves in this fashion until it has become an outside
corner. This contracting slide can be reversed by performing an
analogous procedure over the outside corner, called an {\it
expanding slide}. Performing a contracting slide over each inside
corner of $ T$ reduces $ T$ to a tableau ${ T}^{\rm n}$ of
normal shape. This procedure is  known as {\it jeu de taquin}.
 ${ T}^{\rm n}$ is independent of the
particular sequence of inside corners used \cite{thomas}, and so
${\rm T}^{\rm n}$ is  called the {\it rectification} of $\rm T$. A
word $w$ corresponds by RSK--correspondence to a pair $(P(w), Q(w))$
 of tableaux of the same shape, with $Q(w)$ standard, called the $Q$--symbol or recording tableau of
 $w$.
  Here we consider a variation of RSK-correspondence known as
 the  Burge   correspondence \cite{burge,ful}.
Given $w=w_1w_2\cdots w_s$,  ${P}(w)$ is the insertion tableau
obtained by column insertion of the letters
 of $w$ from left to right \cite{ful}. The corresponding recording
tableau ${Q}(w)$ is obtained by placing in $1,2,\dots ,s $. If $w$
is the word of  $\rm T$ then $P(w)={ T}^{\rm n}$. Insertion can be
translated into the language of Knuth elementary transformations.
Two words $w$ and $v$ are said Knuth equivalent if they have the
same insertion tableau. Each Knuth class is in bijection with the
set of standard tableaux with  shape equal to the unique tableau in
that class. Two tableaux $ T$ and $ R$ are {\it Knuth equivalent},
written $ T\equiv R$, if and only if $P(w(T))=P(w(R))$.
Equivalently, $T^{\rm n}= R^{\rm n}$, i.e. one of them can be
transformed into the other one by a sequence of {\em jeu de taquin}
slides. The insertion tableau of a  Yamanouchi word $w$ with
partition weight $\nu$, is the Yamanouchi tableau ${ Y}(\nu)$. The
recording tableau of a Yamanouchi word $w$ is $U(w)$.

Two tableaux $ T$ and $ R$ of the same shape are {\it dual
equivalent}, written ${ T}\overset{d}{\equiv}{ R}$, if any sequence
of contracting slides and expanding slides that can be applied to
one of them, can also be applied to the other, and the sequence of
shape changes is the same for both \cite{h2,ful}.
 Dual equivalence may
also be characterized by recording tableaux:
 ${ T}\overset{d}{\equiv}{
    R}$ if and only if ${Q}(w({ T}))={Q}(w({ R}))$.
Thus two tableaux of the same normal shape are dual equivalent. Let
$ S$ and $ T$ be tableaux such that $\rm T$ extends $\rm S$, and
consider the set union $\rm S\cup T$. The {\em tableau switching }
\cite{sottile} is a procedure  based on {\em jeu de taquin}
elementary moves on two alphabets that transforms $\rm S\cup \rm T$
into $\rm A\cup \rm B$, where $B$ is a tableau Knuth equivalent to $
T$ which extends $\rm A$, and $A$ is a tableau Knuth equivalent to $
S$. We write $\rm S\cup \rm T\overset {s}\longrightarrow \rm A\cup
\rm B$. In particular, if  $ S$ is of normal shape,  $\rm A=\rm
T^{\rm n}$, and $ S= B^{\rm n}$. Switching of $S$ with $T$ may be
described as follows:  $\widehat T$ is a set of instructions telling
where expanding slides can be applied to $S$. Thus switching and
dual equivalence are related as below and tableaux are completely
characterized by dual and Knuth equivalence.
\begin{theorem}{\em \cite{h2}}\label{t1}
    Let $ T$ and $ U$ be  tableaux with the same normal shape and let $ W$ be a tableau
     which extends  $ T$.
$(1)$ If $ T\cup
      W\overset{s}\longrightarrow  Z\cup  X  $ and $ U\cup  W\overset{s}\longrightarrow  Z\cup
     Y$, then $ X\overset{d}\equiv  Y$.

     $(2)$ Let $\mathcal{D}$ be a dual equivalence class and $\mathcal{K}$
    be a Knuth equivalence class, both corresponding to the same normal shape.
    Then, there is a unique tableau in $\mathcal{D}\cap\mathcal{K}$.

\end{theorem}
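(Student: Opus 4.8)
The plan is to prove part $(1)$ first and then use it as the engine for the existence half of $(2)$, while uniqueness in $(2)$ will follow from injectivity of the Burge correspondence. For $(1)$, the key observation is that since $T$ and $U$ are both of normal shape $\lambda$, they are dual equivalent, because any two tableaux of the same normal shape lie in a single dual equivalence class (as recalled just before the statement). I would then exploit the description of switching given above: switching an inner tableau against the outer tableau $W$ is carried out by reading the standardization $\widehat W$ as a list of instructions and performing the corresponding expanding slides on the inner tableau, which transports it to the new outer factor; the new inner factor is $W^{\rm n}$, the rectification of $W$, which is why the same $Z=W^{\rm n}$ appears in both switchings. Since $W$, hence $\widehat W$, is identical in the two runs and $T\overset{d}\equiv U$, the very definition of dual equivalence guarantees that this common sequence of slides is applicable to both inputs with identical shape changes and that dual equivalence is preserved at every step. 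Consequently $X\overset{d}\equiv Y$, equivalently $Q(w(X))=Q(w(Y))$. The point requiring care is that the instruction list read off from $\widehat W$ is literally the same for the two runs; this is immediate once one notes that $T$ and $U$ occupy the same cells and $W$ sits in the same complementary region.

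For uniqueness in $(2)$, suppose $V_1,V_2\in\mathcal D\cap\mathcal K$. Membership in the same Knuth class gives $P(w(V_1))=P(w(V_2))$, and membership in the same dual equivalence class gives $Q(w(V_1))=Q(w(V_2))$ together with equality of shapes. Since the Burge (RSK) correspondence is a bijection between words and pairs $(P,Q)$ of equal shape, we obtain $w(V_1)=w(V_2)$; as two tableaux of a common skew shape are reconstructed from their word by filling the cells in the fixed order prescribed by the shape, this forces $V_1=V_2$.

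For existence in $(2)$, I would manufacture a representative by transporting a known one across Knuth classes by means of $(1)$. Let $R\in\mathcal D$ have shape $\theta=\beta/\alpha$ and rectification $R^{\rm n}=P'$ of shape $\lambda$, and let $P$ be the insertion tableau attached to $\mathcal K$, also of shape $\lambda$ by hypothesis. Choose any tableau $Z'$ of normal shape $\alpha$; switching $Z'\cup R$, whose inner factor is of normal shape, produces $P'\cup W$ with $W$ on $\beta/\lambda$ and $W^{\rm n}=Z'$ (so $W$ rectifies to shape $\alpha$). By involutivity of switching this reads as $P'\cup W\overset{s}\longrightarrow Z'\cup R$. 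Now switch the \emph{same} $W$ against $P$ instead of $P'$: this gives $P\cup W\overset{s}\longrightarrow Z'\cup V$, where the inner output is again $W^{\rm n}=Z'$, forcing $V$ to have shape $\beta/\alpha=\theta$, and where $V^{\rm n}=P$. Applying $(1)$ to the two switchings $P'\cup W\to Z'\cup R$ and $P\cup W\to Z'\cup V$ — legitimate since $P'$ and $P$ are both normal of shape $\lambda$ and the inner output $Z'$ coincides — yields $R\overset{d}\equiv V$. Thus $V$ shares the shape and $Q$-symbol of $R$, so $V\in\mathcal D$, while $V^{\rm n}=P$ places $V\in\mathcal K$; hence $V\in\mathcal D\cap\mathcal K$.

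I expect the main obstacle to be part $(1)$, and precisely the two facts it rests on: that switching acts on the inner tableau as a single sequence of jeu de taquin slides dictated by $\widehat W$, and that dual equivalence is stable under such a common sequence. Once these are secured, the shape bookkeeping (the total region $\beta$ and the inner rectification $W^{\rm n}$ being unaffected by replacing $P'$ with $P$) and the involutivity of switching make the existence construction in $(2)$ essentially formal, and uniqueness is a direct appeal to the bijectivity of the Burge correspondence.
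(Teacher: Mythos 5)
Your proof is correct, and for part $(2)$ it coincides with what the paper actually does: your triple switching $Z'\cup R\overset{s}\longrightarrow P'\cup W$, replace $P'$ by the normal-shape representative $P$ of $\mathcal K$, then $P\cup W\overset{s}\longrightarrow Z'\cup V$, is exactly the paper's diagram $W\cup U\overset{s}\longrightarrow U^{\rm n}\cup Z$, $V\cup Z\overset{s}\longrightarrow W\cup X$ (only the letters are permuted), and your uniqueness argument via injectivity of the Burge correspondence is the same as the paper's remark that two words in one Knuth class cannot share a $Q$--symbol. The only place you go beyond the paper is part $(1)$: the paper does not prove it at all but cites Haiman \cite{h2}, whereas you derive it from the description of switching as a sequence of expanding slides applied to the inner tableau at positions read off from $\widehat W$, together with the (easy, definitional) fact that a common slide sequence preserves dual equivalence. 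That derivation is sound, but be aware that its first ingredient --- switching with a fixed standardized outer tableau being literally a {\em jeu de taquin} slide sequence, independent of the inner tableau --- is itself a nontrivial theorem of Benkart--Sottile--Stroomer \cite{sottile}, which this paper states without proof; so your argument is self-contained exactly modulo the same imported fact the paper relies on, and your instinct that this is "the point requiring care" is right.
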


 Algorithm to construct ${\mathcal D}\cap {\mathcal K}$: Let
$U\in {\mathcal D}$ and let $V\in {\mathcal K}$ be the only tableau
with
 normal shape in this class, and $W$ any tableau that $U$ extends:
$\begin{matrix}
W\cup U &&W\cup X\\
{\scriptstyle s}\downarrow&&\uparrow{\scriptstyle s}\\
U^{\rm n}\cup Z&\rightarrow&V\cup Z.
\end{matrix}$ Thus $X\overset{d}{\equiv} U$,
$X\overset{}{\equiv}V$,
and  $\mathcal{D}\cap\mathcal{K}=\{X\}$.
 since two words in the same Knuth class can
 not have the same $Q$--symbol.

\subsection{The transposition of the rotated  reversal LR tableau }

    Given a tableau $\rm T$ of normal shape, the evacuation ${\rm T}^E$ is the rectification of $\rm T^\bullet$, that
    is,
    $\rm T^E={\rm T^\bullet}^n$.
${\rm T}^E$ is also  obtained
 either as the insertion tableau of the word $w({\rm
 T})^*$; or  according to
 the Sch\"utzenberger evacuation algorithm; or
  applying the reverse {\em jeu de taquin} slides to $T$, in the smallest rectangle containing $T$, to obtain
$T^{\rm a}$ the anti-normal form $T$. Thus ${T^{\rm a}
}^\bullet=T^E=T^{\bullet \rm n}$. If $w$ is a Yamanouchi word, by
duality of Burge correspondence, $Q(w^*)=U(w)^E=U(w)^{\bullet \rm
n}={U(w)^{\rm a} }^\bullet$.
Given $w\equiv{\rm Y}(\nu)$, we may now define the word
$w^{\diamond}$ as being the unique word satisfying
$w^{\diamond}\equiv{\rm Y}(\nu^t)$ such that ${Q}(w^\diamond)
={Q}(w)^T=U(w)^t$. Since
     $(w^{\diamond})^{\diamond}=w$, the map $w\mapsto w^{\diamond}$ establishes a bijection
     between the Knuth classes of ${ Y}(\nu)$ and ${
     Y}(\nu^t)$. The word $w^*$ is the
     unique word satisfying $w^{\*}\equiv{\rm Y}(\nu^*)$ such that
${Q}(w^*) = U(w)^{\bullet\rm n}$,  and $w^{\diamond*}$ is the unique
word satisfying $w^{\diamond*}\equiv{\rm Y}(\nu^{t\,*})$ such that
$Q(w^{\diamond*})=U(w)^{E\,t}$.

    Given a tableau ${ T}$ of any shape,  the reversal ${ T}^e$ is the unique tableau  Knuth equivalent to ${\rm T}^\bullet$, and
    dual equivalent to ${\rm T}$ \cite{sottile}.
 By Theorem~\ref{t1},
$T^e=[T^{\rm n\,E}]_K\cap[T]_d $, where $[$ $ ]_K$ denotes Knuth
class and $[$ $ ]_d$ dual Knuth class. If $T$ has normal shape,
$T^E=T^e$. If ${ T}\in{\rm LR}(\lambda/\mu,\nu)$, then ${ T}^e$ is
the only tableau Knuth equivalent to ${ Y}(\nu^*)$ and dual
equivalent to $ T$. Since crystal reflection operators, for the
definition see \cite{plaxique,Lot}, preserve the $Q$--symbol, we may
in the case of LR tableaux characterize explicitly the word of $T^e$
as follows. Let $w$ be a Yamanouchi word of weight
$\nu=(\nu_1,\dots,\nu_t)$, and let $\sigma_i$ denote the reflection
crystal  operator
 acting on the subword
 over the alphabet $\{i,i+1\}$, for all $i$.
If $s_{i_r}\cdots s_{i_1}$ is the longest permutation in
$\mathcal{S}_t$, put $\sigma_0:=\sigma_{i_r}\cdots\sigma_{i_1}$.
Then $\sigma_0 w$ is a dual Yamanouchi word of weight $\nu^*$.
Moreover, $w\equiv w'$ if and only if
$\sigma_i(w)\equiv\sigma_i(w')$, and ${Q}(w)={Q}(\sigma_i(w))$.
Thus, we have proven the following

\begin{theorem}\label{plr*}
    Let $\rm T$ be a LR tableau with
shape $\lambda/\mu$ and word $w$. Then ${\rm T}^e$ is the dual LR
tableau of shape $\lambda/\mu$ and word $\sigma_0w$, and
$T^{e\,\blacklozenge\bullet}$ is the LR tableau of shape
$(\lambda/\mu)^t$ and column word $(\sigma_0 w)^{\diamond\,*}$.
\end{theorem}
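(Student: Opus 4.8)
The plan is to establish the two assertions of Theorem~\ref{plr*} in sequence, since the second is obtained from the first by applying the bijection $\blacklozenge\bullet$, whose effect on words was already computed in the preliminaries. First I would prove that ${\rm T}^e$ is the dual LR tableau of shape $\lambda/\mu$ with word $\sigma_0 w$. By the characterization recalled just before the statement, ${\rm T}^e$ is the \emph{unique} tableau that is Knuth equivalent to ${\rm Y}(\nu^*)$ and dual equivalent to ${\rm T}$; so it suffices to exhibit a tableau of shape $\lambda/\mu$ with word $\sigma_0 w$ and check it meets both conditions. The dual-equivalence condition is immediate from the fact, stated in the paragraph preceding the theorem, that the crystal reflection operators $\sigma_i$ preserve the $Q$--symbol, hence $Q(\sigma_0 w)=Q(w)$, and dual equivalence is detected by the $Q$--symbol via ${ T}\overset{d}{\equiv}{ R}\iff Q(w({ T}))=Q(w({ R}))$. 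The Knuth-equivalence condition amounts to checking that $\sigma_0 w$ rectifies to ${\rm Y}(\nu^*)$; this follows because $\sigma_0 w$ is a dual Yamanouchi word of weight $\nu^*$ (as asserted in the text), and a dual Yamanouchi word of weight $\nu^*$ has insertion tableau ${\rm Y}(\nu^*)$ by the duality of the Burge correspondence together with $Q(w^*)=U(w)^E$.

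For the second assertion I would simply transport the first one through the composite map $\blacklozenge\bullet$. The preliminaries establish that $\blacklozenge$ sends a (dual) LR tableau of shape $\lambda/\mu$ and word $v$ to an LR tableau of shape $(\lambda/\mu)^\diamondsuit$ with column word $v^{\diamond}$, and that $\blacklozenge\bullet=\bullet\blacklozenge$ carries shape $(\lambda/\mu)^*$ through to $(\lambda/\mu)^t$, converting a word $v$ into column word $v^{*\diamond}=v^{\diamond *}$. Applying this with $v=\sigma_0 w$, the first part gives that ${\rm T}^e$ has word $\sigma_0 w$, and so ${\rm T}^{e\,\blacklozenge\bullet}$ has shape $((\lambda/\mu)^*)^{?}$ rearranged to $(\lambda/\mu)^t$ and column word $(\sigma_0 w)^{\diamond *}$, which is exactly the claimed word. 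Here I would invoke the explicit shape computations from Subsection~2.4, namely that $\diamondsuit=*t=t*$ and that $\blacklozenge\bullet$ lands in ${\rm LR}((\lambda/\mu)^t,\nu^{t*})$, to confirm that the output shape is indeed $(\lambda/\mu)^t$ and the weight is $\nu^{t*}$, consistent with a dual-to-LR passage producing an honest LR tableau.

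The main obstacle is the first assertion, specifically verifying that $\sigma_0 w$ is genuinely dual Yamanouchi of weight $\nu^*$ and that it rectifies to ${\rm Y}(\nu^*)$. The operator $\sigma_0$ is defined as the product $\sigma_{i_r}\cdots\sigma_{i_1}$ of crystal reflections corresponding to a reduced word for the longest element of $\mathcal S_t$, so I would need to argue that this composite acts on a Yamanouchi word precisely as the Lusztig-type involution realizing the longest Weyl group element, sending the highest-weight word $w$ of weight $\nu$ to the lowest-weight word of weight $\nu^* = (\nu_t,\dots,\nu_1)$; dualizing then yields a Yamanouchi word, which is what "dual Yamanouchi of weight $\nu^*$" means. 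The subtle point is independence of the reduced expression $s_{i_r}\cdots s_{i_1}$, which I would handle by appealing to the braid relations satisfied by the crystal reflection operators $\sigma_i$ on words, as in the cited crystal references \cite{plaxique,Lot}; given that, the fact that $\sigma_0 w$ rectifies to ${\rm Y}(\nu^*)$ is forced by its being the unique lowest-weight element in the crystal component of ${\rm Y}(\nu)$, whose rectification class is ${\rm Y}(\nu)$ and whose evacuated/reversed class is ${\rm Y}(\nu^*)$.
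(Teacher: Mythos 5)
Your proposal is correct and follows essentially the same route as the paper: the paper obtains the first claim from precisely the facts you invoke ($T^e$ is the unique tableau Knuth equivalent to $Y(\nu^*)$ and dual equivalent to $T$, the $\sigma_i$ preserve $Q$--symbols, and $\sigma_0 w$ is a dual Yamanouchi word of weight $\nu^*$), and the second claim by transporting the first through $\blacklozenge\bullet$ exactly as the preliminaries prescribe. Your worry about braid relations, while resolvable as you suggest (the crystal reflection operators do satisfy them on word crystals, these being normal crystals), is in fact avoidable: each $\sigma_i$ reflects the weight and preserves the $Q$--symbol, so $\sigma_0 w$ lies in the coplactic class of $w$ and has the extreme weight $\nu^*$, whose unique representative in that class is the dual Yamanouchi word, whence any expression for the longest permutation produces the same $\sigma_0 w$.
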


\begin{corollary} $T^{e\,\blacklozenge\bullet}$ is the unique tableau
Knuth equivalent to $Y(\nu^t)$ and dual equivalent to $\widehat
T^t$.
\end{corollary}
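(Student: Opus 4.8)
The plan is to recognize the statement as the assertion that $T^{e\,\blacklozenge\bullet}$ is exactly the unique tableau singled out by Theorem~\ref{t1}$(2)$ out of the two equivalence classes $[Y(\nu^t)]_K$ and $[\widehat T^t]_d$. First I would check that these two classes are attached to a common normal shape: $Y(\nu^t)$ has shape $\nu^t$, while $\widehat T^t$ rectifies to shape $\nu^t$ because $T$ rectifies to $Y(\nu)$ of shape $\nu$, standardization preserves the rectification shape, and transposition conjugates it. Hence Theorem~\ref{t1}$(2)$ applies and there is exactly one tableau in $[Y(\nu^t)]_K\cap[\widehat T^t]_d$; it then suffices to show that $T^{e\,\blacklozenge\bullet}$ lies in both classes.

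Membership in the Knuth class is immediate from Theorem~\ref{plr*}: since $T^{e\,\blacklozenge\bullet}$ is an LR tableau of weight $\nu^t$, its word is Yamanouchi of partition weight $\nu^t$, and the insertion tableau of such a word is $Y(\nu^t)$; thus $T^{e\,\blacklozenge\bullet}\equiv Y(\nu^t)$.

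The substantive step is the dual equivalence $T^{e\,\blacklozenge\bullet}\overset{d}{\equiv}\widehat T^t$, which by the recording-tableau characterization of dual equivalence amounts to matching $Q$-symbols. On one side, Theorem~\ref{plr*} gives the column word $(\sigma_0 w)^{\diamond\,*}$ of $T^{e\,\blacklozenge\bullet}$; writing $v:=(\sigma_0 w)^*$, the identity $(w^*)^{\diamond}=w^{\diamond\,*}$ together with $**=\mathrm{id}$ collapses $(\sigma_0 w)^{\diamond\,*}$ to $v^{\diamond}$, whence $Q(v^\diamond)=U(v)^t$. Since $\sigma_0 w=v^*$ is the word of $T^e$, the reversal facts $Q(\sigma_0 w)=U(w)$ and $Q(v^*)=U(v)^{E}$ force $U(v)=U(w)^{E}$, so this $Q$-symbol equals $U(w)^{E\,t}$. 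On the other side I would compute the $Q$-symbol of $\widehat T^t$ from $Q(w(\widehat T))=U(w)$ (standardization preserves the $Q$-symbol) and the standard-tableau relation $w_{col}(\widehat T^t)=\mathit{rev}\,w(\widehat T)$, using the behaviour of the recording tableau under reversal and transposition to land again on $U(w)^{E\,t}$. Equality of the two $Q$-symbols yields the desired dual equivalence.

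The main obstacle is precisely this last reconciliation: one must control how the recording tableau transforms under transposition of a standard tableau and under word reversal, and must keep the row-reading convention of the $Q$-symbol (used in the definition of dual equivalence) consistent with the column-reading word delivered by Theorem~\ref{plr*}. A cleaner, equivalent route that sidesteps the explicit reversal computation is to standardize throughout: granting that $\blacklozenge\bullet$ commutes with standardization and acts as plain transposition on standard tableaux (the shape identity $(\lambda/\mu)^{\diamondsuit\,*}=(\lambda/\mu)^t$ already confirms this at the level of shapes), one gets $\widehat{T^{e\,\blacklozenge\bullet}}=(\widehat{T^e})^t$; since $T^e\overset{d}{\equiv} T$ gives $\widehat{T^e}\overset{d}{\equiv}\widehat T$, and transposition preserves dual equivalence, transposing yields $(\widehat{T^e})^t\overset{d}{\equiv}\widehat T^t$, hence $T^{e\,\blacklozenge\bullet}\overset{d}{\equiv}\widehat T^t$. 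Either way, combining the two memberships with the uniqueness from Theorem~\ref{t1}$(2)$ completes the proof.
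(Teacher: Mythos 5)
Your primary argument (the $Q$--symbol computation) is correct and is in substance the paper's own proof: the paper establishes the dual equivalence through the single chain $Q(rev\,\widehat w)=Q(\widehat w)^{E\,t}=Q(w)^{E\,t}=Q(w^{\diamond\,*})=Q(\sigma_0(w^{\diamond\,*}))=Q((\sigma_0 w)^{\diamond\,*})$, using exactly your ingredients --- $w_{col}(\widehat T^t)=rev\,w(\widehat T)$, invariance of the $Q$--symbol under standardization and under the crystal operators $\sigma_i$, $Q(u^\diamond)=U(u)^t$, $Q(u^*)=U(u)^E$, and the classical reversal fact $Q(rev\,u)=Q(u)^{E\,t}$, which the paper, like you, invokes without proof. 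Your substitution $v=(\sigma_0 w)^*$ and the derivation $U(v)=U(w)^E$ are a mild reorganization of that same chain, and the Knuth membership (an LR tableau of weight $\nu^t$ rectifies to $Y(\nu^t)$) plus uniqueness via Theorem~\ref{t1}$(2)$ matches the setup the paper relies on.

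The alternative route you append, however, is broken: the identity $\widehat{T^{e\,\blacklozenge\bullet}}=(\widehat{T^e})^t$ is false. Take the paper's running example with $\mu=(2,1)$, $\nu=(5,3,2)$, $\lambda=(6,4,3)$. There $\widehat{T^e}$ has rows $(1,2,9,10)$, $(3,4,5)$, $(6,7,8)$ (occupying columns $3$--$6$, $2$--$4$, $1$--$3$), so $(\widehat{T^e})^t$ has rows $(6)$, $(3,7)$, $(1,4,8)$, $(2,5)$, $(9)$, $(10)$; but standardizing the displayed $T^{e\bullet\blacklozenge}$ gives rows $(3)$, $(2,6)$, $(1,5,8)$, $(4,7)$, $(9)$, $(10)$ --- already the entries in box $(1,3)$ differ ($6$ versus $3$). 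The shape identity $(\lambda/\mu)^{\diamondsuit\,*}=(\lambda/\mu)^t$ controls only shapes: $\blacklozenge$ is defined via the word substitution $w\mapsto w^\diamond$ on Yamanouchi (or dual Yamanouchi) words, and a standardization such as $\widehat{T^e}$, having weight $(1^s)$, is in general not in the domain of $\blacklozenge$ at all, so ``acts as plain transposition on standard tableaux'' is not even well posed. What survives is only the dual equivalence $\widehat{T^{e\,\blacklozenge\bullet}}\overset{d}{\equiv}(\widehat{T^e})^t$, which is essentially the statement being proved, so repairing this route collapses it back into your first argument; since you conclude ``either way,'' this must be flagged. Keep the first route and drop the second.
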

\begin{proof} It is enough to see that the column words of $T^{e\,\blacklozenge\bullet}$ and $\widehat T^t$ have the same $Q$--symbol.
Let $\widehat w$ be the word of $\widehat T$. As $rev\,\widehat w$,
the reverse word  of
 $\widehat T$, is the column word of  $\widehat T^t$, then
 $Q(rev\,\widehat w)=Q(\widehat w)^{E\,t}=$ $Q(w)^{E\,t}=Q(w^{\diamond\,
 *})=$ $Q(\sigma_0(w^{\diamond\, *}))=Q((\sigma_0w)^{\diamond\,
 *}).$
\end{proof}

We recall that the action of crystal reflection operators on
 words corresponds to {\em jeu de taquin} slides on
two-row tableaux. In particular,
 if $w$ is a Yamanouchi word of weight $\nu$ and
  $\theta _i$ denotes the {\em jeu de taquin} action on the
consecutive rows $i$ and $i+1$ of $U(w)$, then $\theta_iU(w)$  is a
tableau of skew-shape $(i\,i+1)\nu$ such that  any two consecutive
rows define a two-row tableau of normal or anti-normal shape. The
labels of the $j$-th row of $\theta_iU(w)$ are precisely the $k$'s
such that $(\sigma_i w)_ k = j$. Put
$\theta_0:=\theta_{i_r}\dots\theta_{i_1}$ with $i_r,\dots,i_1$ as in
$\sigma_0$. Thus $\theta_0U(w)=U(w)^{\rm a }$ and $Q(\sigma_0
w)=U(w)^{\rm a \,\rm n}$. This defines the commutative scheme
$$\begin{matrix}
w&\longleftrightarrow&\sigma_{i_1}w&\longleftrightarrow&\sigma_{i_2}\sigma_{i_1}w&\longleftrightarrow&\cdots&\longleftrightarrow&\sigma_0w\\
\updownarrow&        &\updownarrow&                    &\updownarrow           &                    &      &                  &\updownarrow\\
U(w)&\longleftrightarrow&\theta_{i_1}U(w)&\longleftrightarrow&\theta_{i_2}\theta_{i_1}U(
w)&\longleftrightarrow&\cdots&\longleftrightarrow&\theta_0U(w).
\end{matrix}$$

\noindent(This was the procedure in \cite{az}.)
 Similarly, if $\sigma_iT$ denotes the tableau obtained by the action of $\sigma_i$ on its word, we
 get the commutative scheme
$$\begin{matrix}
T&\longleftrightarrow&\sigma_{i_1}T&\longleftrightarrow&\sigma_{i_2}\sigma_{i_1}T&\longleftrightarrow&\cdots&\longleftrightarrow&\sigma_0T\\
{\tau}\updownarrow&        &{\tau}\updownarrow&                    &{\tau}\updownarrow           &                    &      &                  &{\tau}\updownarrow\\
P&\longleftrightarrow&\theta_{i_1}P&\longleftrightarrow&\theta_{i_2}\theta_{i_1}P&\longleftrightarrow&\cdots&\longleftrightarrow&\theta_0P=P^{\rm
a}.\end{matrix}$$

\begin{theorem}\label{reduc} Let $T$ be a LR tableau and $\tau (T)=P$. Then the
following commutative scheme holds
$$\begin{matrix}
T&\overset{\text{$e$}}\longleftrightarrow&T^e&\overset{\text{\,$\bullet$}}\longleftrightarrow&T^{e\bullet}
\\
{\tau}\updownarrow&&{\tau}\updownarrow&&{\tau}\updownarrow\\P&\underset{\text{$\rm
a$}}{\overset{\text{\scriptsize }}\longleftrightarrow}&P^{\rm
a}&\overset{\text{$\bullet$}}\longleftrightarrow &P^E.
\end{matrix}$$
\end{theorem}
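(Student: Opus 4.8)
The plan is to check the two squares separately, reducing each to material already in place.

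For the left square I would invoke the commutative scheme displayed immediately before the statement, which says that $\tau$ intertwines the crystal action $\sigma_i$ on an LR tableau (acting on its word) with the \emph{jeu de taquin} action $\theta_i$ on the consecutive rows $i,i+1$ of the associated normal tableau, i.e. $\tau(\sigma_iT)=\theta_i\tau(T)$. Composing along the reduced word $\sigma_0=\sigma_{i_r}\cdots\sigma_{i_1}$ gives $\tau(\sigma_0T)=\theta_0P=P^{\rm a}$. By Theorem~\ref{plr*} the reversal $T^e$ is exactly the tableau $\sigma_0T$ (same shape $\lambda/\mu$, word $\sigma_0w$), so $\tau(T^e)=P^{\rm a}$, which is the commutativity of the left square.

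For the right square the key point is that $\tau$ commutes with the rotation $\bullet$. On recording matrices $\tau$ is transposition $M\mapsto M^t$, while $\bullet$ is the $180$-degree rotation, whose effect on the recording matrix of an LR tableau is recalled in the preliminaries. Writing $s=\ell(\lambda)$ and $t=\ell(\nu)$ and letting $N$ be the recording matrix of $U:=T^e$ (an $s\times t$ array), a direct index computation shows that both $\tau(U^\bullet)$ and $\tau(U)^\bullet$ have recording matrix $N_{\,s+1-j,\;t+1-i}$: on one side one first rotates $N$ and then transposes, on the other side one first transposes $N$ and then rotates the resulting $t\times s$ array, and these two orders of transposition and $180$-degree rotation agree. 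Hence $\tau(U^\bullet)=\tau(U)^\bullet$. Taking $U=T^e$ and using $\tau(T^e)=P^{\rm a}$ from the left square gives $\tau(T^{e\bullet})=(P^{\rm a})^\bullet$, and the identity $T^E=(T^{\rm a})^\bullet$ recalled earlier yields $(P^{\rm a})^\bullet=P^E$. Thus $\tau(T^{e\bullet})=P^E$, the commutativity of the right square.

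The only delicate point is the bookkeeping: because $\tau$ interchanges the role played by the rows of the skew shape and by the entry values, the rotation formula has to be applied with the parameters $(s,t)$ on the LR side and with $(t,s)$ on the normal-shape side. Once the indices are aligned, the fact that transposition and $180$-degree rotation of a matrix commute is immediate, and no input beyond Theorem~\ref{plr*} and the preceding crystal/\emph{jeu de taquin} scheme is required.
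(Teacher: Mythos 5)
Your proof is correct and takes essentially the same route as the paper, which deduces the theorem directly from the preceding material: the left square is the $\sigma_i$/$\theta_i$ commutative scheme composed along $\sigma_0$ together with Theorem~\ref{plr*}'s identification of $T^e$ as the tableau with word $\sigma_0 w$, and the right square is the commutation of $\tau$ (transposition of recording matrices) with the rotation $\bullet$ plus the identity $(P^{\rm a})^\bullet=P^E$ recalled earlier. Your index computation agrees with the paper's formula that the recording matrix of $U^\bullet$ is $(N_{s+1-i,\,t+1-j})$, so no step is missing.
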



\subsection{ Main bijections} As already mentioned bijections $\varrho^{WHS}$ and
$\varrho^{BSS}$ are identical. We now focus on $\varrho^{BSS}$ and
$\varrho_3$.
 Let
$$\begin{matrix}
\varrho^{BSS}:&LR(\lambda/\mu,\nu)&\rightarrow&LR(\lambda^t/\mu^t,\nu^t)\\
&T&\mapsto&\varrho^{BSS}(T)=[Y(\nu^t)]_K\cap[\widehat{T}^t]_d
\end{matrix}\qquad \text{\cite{sottile}}.$$
\noindent  The image of $T$ by the $BSS$-bijection is the unique
tableau  of shape $\lambda^t/\mu^t$ whose rectification is
$Y(\nu^t)$ and the $Q$--symbol of the column reading word is
$Q(T)^{Et}$. The idea behind this bijection can be told as follows:
$\widehat T$ constitutes a set of instructions telling where
expanding slides can be applied to $Y(\mu)$. Then $\widehat T^t$ is
a set of instructions telling where expanding slides can be applied
to $Y(\mu)^t$. Tableau--switching provides an algorithm  to give way
to those instructions: $$\begin{array}{ccccccccccccccc} {Y(\mu)\cup
T}&\underset{\text{of T}}{\overset{\text{\scriptsize
standardization}}{\longrightarrow}}& {Y(\mu)\cup \widehat
T}&\underset{\text{of $\widehat T$}}{\overset{\text{\scriptsize
transposition}}{\longrightarrow}}&{Y(\mu^t)\cup {\widehat T}^t}&&Y(\mu^t)\cup \varrho^{BSS}(T)&\\
&&&&
 \downarrow{\scriptstyle s}&&\uparrow{\scriptstyle s}&\\
 &&&&{(\widehat{T}^t)^{\rm n}\cup Z}&\overset{{}}\mapsto&Y(\nu^t)\cup Z&
\end{array}.$$
\noindent 
Then
$\varrho^{BSS}(T)\equiv Y(\nu^t)$ and $\varrho^{BSS}(T)\equiv_d
\widehat T^t$.

\begin{example} Let $T$ in  $LR (\lambda/\mu, \nu)$ with $\mu=(2,1)$, $\nu=(5,3,2)$ and $\lambda=(6,4,3):$

$\begin{array}{cccccccccc}
{T=\tableau[Y]{~,~,1,1,1,1|~,1,2,2|2,3,3}}& \rightarrow&
{\widehat{T}=\tableau[Y]{~,~,2,3,4,5|~,1,7,8|6,9,10}}& \rightarrow&
&{\widehat{T}^t=\tableau[Y]{~,~,6|~,1,9|2,7,10|3,8|4|5}}&\rightarrow
\end{array}$

${\begin{matrix} \rightarrow
Y(\mu^t)\cup\widehat{T}^t=
{\tableau[Y]{\color{red}{\bf{1}},{\color{red}\bf{1}},6|{\color{red}\bf{2}},1,9|2,7,10|3,8|4|5}}
&&
{\tableau[Y]{{\color{red}\bf{1}},{\color{red}\bf{1}},1|{\color{red}\bf{2}},1,2|1,2,3|2,3|4|5}}=Y(\nu^t)\cup\varrho^{BSS}(T)\\
{\scriptstyle s}\downarrow&&\uparrow{\scriptstyle s}\\
(\widehat{T}^t)^{\rm n}\cup
Z={\tableau[Y]{1,6,9|2,7,10|3,8,{\color{red}\bf{1}}|4,{\color{red}\bf{2}}|5|{\color{red}\bf{1}}}}&\;\;\overset{\text{}}\longrightarrow\;\;
&{\tableau[Y]{1,1,1|2,2,2|3,3,{\color{red}\bf{1}}|4,{\color{red}\bf{2}}|5|{\color{red}\bf{1}}}}=Y(\nu^t)\cup
Z
\end{matrix}}.$
\end{example}

Let
$$\begin{matrix}
\varrho_3:&LR(\lambda/\mu,\nu)&\rightarrow&LR(\lambda^t/\mu^t,\nu^t)\\
&T&\mapsto&\varrho_3(T)=T^{e\,\bullet\,\blacklozenge}\\
&w&\mapsto&\sigma_0 w^{*\diamond}
\end{matrix}\qquad \text{\cite{za,az,az1}}.$$
\noindent As $T^{e\bullet\blacklozenge}$ is  the unique tableau
Knuth
 equivalent to $Y( \nu^t)$
 and dual equivalent to $(\widehat{T})^t$, we have

\begin{corollary} $\varrho^{BSS}$
and $\varrho_3$ are identical bijections.

\end{corollary}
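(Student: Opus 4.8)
The plan is to prove the two maps agree pointwise by showing that $\varrho^{BSS}(T)$ and $\varrho_3(T)$ are each determined by the same pair of equivalence-class membership conditions, and then to appeal to the uniqueness in Theorem~\ref{t1}$(2)$. Since both $\varrho^{BSS}$ and $\varrho_3$ send $LR(\lambda/\mu,\nu)$ to tableaux of shape $\lambda^t/\mu^t$, it suffices to fix $T$ and check $\varrho^{BSS}(T)=\varrho_3(T)$.

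First I would read off from the definition of the $BSS$-bijection (and the switching diagram just above) that $\varrho^{BSS}(T)=[Y(\nu^t)]_K\cap[\widehat T^t]_d$ is, by Theorem~\ref{t1}$(2)$, the unique tableau of shape $\lambda^t/\mu^t$ that is Knuth equivalent to $Y(\nu^t)$ and dual equivalent to $\widehat T^t$. For $\varrho_3$, its definition gives $\varrho_3(T)=T^{e\,\bullet\,\blacklozenge}$; using the commutation $\blacklozenge\bullet=\bullet\blacklozenge$ recorded in the preliminaries, this equals $T^{e\,\blacklozenge\,\bullet}$. The corollary immediately following Theorem~\ref{plr*} asserts precisely that $T^{e\,\blacklozenge\,\bullet}$ is the unique tableau Knuth equivalent to $Y(\nu^t)$ and dual equivalent to $\widehat T^t$, so $\varrho_3(T)$ obeys the same two defining conditions as $\varrho^{BSS}(T)$.

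Comparing the two characterizations, both $\varrho^{BSS}(T)$ and $\varrho_3(T)$ are the unique tableau of shape $\lambda^t/\mu^t$ lying in $[Y(\nu^t)]_K\cap[\widehat T^t]_d$; by the uniqueness in Theorem~\ref{t1}$(2)$ they coincide, and since $T$ is arbitrary we conclude $\varrho^{BSS}=\varrho_3$ as bijections. At this point there is no real obstacle left: the substantive work---computing the column word $(\sigma_0 w)^{\diamond\,*}$ of $T^{e\,\blacklozenge\,\bullet}$ and verifying that its $Q$-symbol is $Q(\widehat T^t)=Q(w)^{E\,t}$, which is exactly what forces the dual equivalence to $\widehat T^t$---was already discharged in Theorem~\ref{plr*} and its corollary. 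The only step requiring a moment's care is the interchange $\blacklozenge\bullet=\bullet\blacklozenge$ used to reconcile the composition order $e\,\bullet\,\blacklozenge$ in the definition of $\varrho_3$ with the order $e\,\blacklozenge\,\bullet$ appearing in the earlier corollary.
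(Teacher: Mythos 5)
Your proof is correct and follows essentially the same route as the paper: both maps are characterized as the unique tableau Knuth equivalent to $Y(\nu^t)$ and dual equivalent to $\widehat T^t$ (via the definition of $\varrho^{BSS}$ and the corollary to Theorem~\ref{plr*}), and Theorem~\ref{t1}$(2)$ forces them to coincide. Your explicit invocation of $\blacklozenge\bullet=\bullet\blacklozenge$ to reconcile $T^{e\,\bullet\,\blacklozenge}$ with $T^{e\,\blacklozenge\,\bullet}$ is a point the paper glosses over silently, so that care is welcome.
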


\begin{example} Let $T$ in  $LR (\lambda/\mu, \nu)$ as before:
\def\Tscale{.78}
$$
\begin{array}{ccccccccccc}
 {{\rm
T}=\tableau[Y]{~,~,1,1,1,1|~,1,2,2|2,3,3}}&\overset{\text{$e$}}{\underset{\text{reversal}}{\rightarrow}}&
{{\rm
T}^e=\tableau[Y]{~,~,1,1,3,3|~,2,2,2|3,3,3}}&\underset{\text{of
$\lambda/\mu$}}{\overset{\text{transpose}}{\rightarrow}}
&{\tableau[Y]{~,~,3|~,2,3|1,2,3|1,2|3|3}}&\rightarrow&
{\tableau[Y]{~,~,1|~,1,2|1,2,3|2,3|4|5}=\varrho^{BSS}(\rm T)}\cr
&&&&&&\cr
 w=1111221332&\rightarrow&\sigma_0
w=3311222333&\overset{\text{reverse}}\rightarrow&
3332221133&\rightarrow&1231231245\cr &&&&&&\text{\scriptsize column
word of}\cr  &&&&&&\text{\scriptsize
$\varrho_3(T)=\varrho^{BSS}(T)$}\end{array}$$ or
$$\begin{array}{cccccccccc}{\rm
T}={\tableau[Y]{~,~,1,1,1,1|~,1,2,2|2,3,3}}&\overset{\text{e}}\rightarrow&
{\rm
T}^e={\tableau[Y]{~,~,1,1,3,3|~,2,2,2|3,3,3}}&\overset{\text{$\bullet$}}{\rightarrow}&{\rm
T}^{e\bullet}={\tableau[Y]{~,~,~,1,1,1|~,~,2,2,2|1,1,3,3}}
&\overset{\text{$\blacklozenge$}}{\underset{\text{}}{\rightarrow}}
&{\rm
T}^{e\bullet\blacklozenge}={\tableau[Y]{~,~,1|~,1,2|1,2,3|2,3|4|5}}.
\end{array}$$\end{example}

%

\section{Computational  complexity of bijection $\blacklozenge$ and reduction of conjugation symmetry map}

\label{S:comp}

 We show that the computational complexity of bijection
$\blacklozenge$ is linear on the input. We follow closely~\cite{pak}
for this section. Using ideas and techniques of Theoretical Computer
Science, see~\cite{AHU, CLRS}, each bijection can be seen as an
algorithm having one type of combinatorial objects as \emph{input},
and another as \emph{output}. We define a \emph{correspondence} as
an one--to--one map established by a bijection; therefore, obviously
several different defined bijections can produce the same
correspondence. In this way one can think of a correspondence as a
function which is computed by the algorithm, viz. the bijection. The
computational complexity is, roughly, the number of steps in the
bijection. Two bijections are \emph{identical} if and only if they
define the same correspondence. Obviously one task can be performed
by several different algorithms, each one having its own
computational complexity, see~\cite{AHU, CLRS}. For example we recall that
there are several ways to multiply large integers, from naive
algorithms, e.g. the Russian peasant algorithm, to that ones using
FFT (Fast Fourier Transform), e.g. Sch\"{o}nhage--Strassen
algorithm; see e.g.~\cite{GG} for a comprehensive and update
reference. Formally, a function $f$ reduces linearly to $g$, if it
is possible to compute $f$ in time linear in the time it takes to
compute $g$; $f$ and $g$ are linearly equivalent if $f$ reduces
linearly to $g$ and vice versa. This defines an equivalence relation
on functions, which can be translated into a linear equivalence on
bijections.

 Let $D = \(d_{1}, \ldots , d_{n}\)$ be an array of
integers, and let $m = m\(D\) := \max_{i} d_{i}$. The
\emph{bit--size} of $D$, denoted by $\langle D \rangle$, is the
amount of space required to store $D$; for simplicity from now on we
assume that $\langle D \rangle = n \rf{\log_{2} m + 1}$. We view a
bijection $\delta : \cA \longrightarrow \cB$ as an algorithm which
inputs $A \in \cA$ and outputs $B = \delta\(A\) \in \cB$. We need to
present Young tableaux as arrays of integers so that we can store
them and compute their bit--size. Suppose $A \in YT\(\lambda / \mu;
m\)$: a way to encode $A$ is through its recording matrix
$\(c_{i,j}\)$, which is defined by $c_{i,j} =a_{i,j} - a_{i,j-1}$;
in other words, $c_{i,j}$ is the number of $j$'s in the $i$--th row
of $A$; this is the way Young tableaux will be presented in the
input and output of the algorithms. Finally, we say that a map
$\gamma : \cA \longrightarrow \cB$ is \emph{size--neutral} if the
ratio $\frac{\langle \gamma(A)\rangle}{\langle A \rangle}$ is
bounded for all $A \in \cA$. Throughout the paper we consider only
size--neutral maps, so we can investigate the linear equivalence of
maps comparing them by the number of times other maps are used,
without be bothered by the timing. In fact, if we drop the condition
of being size--neutral, it can happen that a map increases the
bit--size of combinatorial objects, when it transforms the input
into the output, and this affects the timing of its subsequent
applications. Let $\cA$ and $\cB$ be two possibly infinite sets of
finite integer arrays, and let $\delta : \cA \longrightarrow \cB$ be
an explicit map between them. We say that $\delta$ has linear cost
if $\delta$ computes $\delta\(A\) \in \cB$ in linear time
$O\(\langle A \rangle \)$ for all $A \in \cA$. There are many ways
to construct new bijections out of existing ones: we call such
algorithms \emph{circuits} and we define below several of them that
we need.

\begin{description}

\item{$\circ$}
Suppose $\delta_{1} : \cA_{1} \longrightarrow \cX_{1}$, $\gamma :
\cX_{1} \longrightarrow \cX_{2}$ and $\delta_{2} : \cX_{2}
\longrightarrow \cB$, such that $\delta_{1}$ and $\delta_{2}$ have
linear cost, and consider $\chi=\delta_{2} \circ \gamma \circ
\delta_{1} : \cA \longrightarrow \cB$. We call this circuit
\emph{trivial} and denote it by $I\(\delta_{1}, \gamma,
\delta_{2}\)$.

\item{$\circ$}
Suppose $\gamma_{1} : \cA \longrightarrow \cX$ and $\gamma_{2} : \cX
\longrightarrow \cB$, and let $\chi=\gamma_{2} \circ \gamma_{1} :
\cA \longrightarrow \cB$. We call this circuit \emph{sequential} and
denote it by $S\(\gamma_{1}, \gamma_{2}\)$.

\item{$\circ$}
Suppose $\delta_{1} : \cA \longrightarrow \cX_{1} \times \cX_{2}$,
$\gamma_{1} : \cX_{1} \longrightarrow \cY_{1}$, $\gamma_{2} :
\cX_{2} \longrightarrow \cY_{2}$, and $\delta_{1} : \cY_{1} \times
\cY_{2} \longrightarrow \cB$, such that $\delta_{1}$ and
$\delta_{1}$ have linear cost. Consider $\chi=\delta_{2} \circ
\(\gamma_{1} \times \gamma_{2}\) \circ \delta_{1} : \cA
\longrightarrow \cB$: we call this circuit \emph{parallel} and
denote it by $P\(\delta_{1},\gamma_{1},\gamma_{2},\delta_{2}\)$.

\end{description}

For a fixed bijection $\alpha$, we say that $\beth$ is an
$\alpha$\emph{--based ps--circuit} if one of the following holds:

\begin{description}

\item{$\bullet$}
$\beth=\delta$, where $\delta$ is a bijection having linear cost.

\item{$\bullet$}
$\beth=I\(\delta_{1}, \alpha, \delta_{2}\)$, where
$\delta_{1},\delta_{2}$ are bijections having linear cost.

\item{$\bullet$}
$\beth=P\(\delta_{1},\gamma_{1},\gamma_{2},\delta_{2}\)$, where
$\gamma_{1},\gamma_{2}$ are $\alpha$--based ps--circuits and
$\delta_{1},\delta_{2}$ are bijections having linear cost.

\item{$\bullet$}
$\beth=S\(\gamma_{1}, \gamma_{2}\)$, where $\gamma_{1}, \gamma_{2}$
are $\alpha$--based ps--circuits.

\end{description}

In other words, $\beth$ is an $\alpha$--based ps--circuit if there
is a parallel--sequential algorithm which uses only a finite number
of linear cost maps and a finite number of application of map
$\alpha$. The $\alpha$--cost of $\beth$ is the number of times the
map $\alpha$ is used; we denote it by $s\(\beth\)$.

Let $\gamma : \cA \longrightarrow \cB$ be a map produced by the
$\alpha$--based ps--circuit $\beth$. We say that $\beth$ computes
$\gamma$ at cost $s\(\beth\)$ of $\alpha$. A map $\beta$ is
\emph{linearly reducible} to $\alpha$, write $\beta \hookrightarrow
\alpha$, if there exist a finite $\alpha$--based ps--circuit $\beth$
which computes $\beta$. In this case we say that $\beta$ can be
computed in at most $s\(\beth\)$ cost of $\alpha$. We say that maps
$\alpha$ and $\beta$ are linearly equivalent, write $\alpha \sim
\beta$, if $\alpha$ is linearly reducible to $\beta$, and $\beta$ is
linearly reducible to $\alpha$. We recall, gluing together, results
proved in Section 4.2 of~\cite{pak}.

\begin{proposition}
Suppose $\alpha_{1} \hookrightarrow \alpha_{2}$ and $\alpha_{2}
\hookrightarrow \alpha_{3}$, then $\alpha_{1} \hookrightarrow
\alpha_{3}$. Moreover, if $\alpha_{1}$ can be computed in at most
$s_{1}$ cost of $\alpha_{2}$, and $\alpha_{2}$ can be computed in at
most $s_{2}$ cost of $\alpha_{3}$, then $\alpha_{1}$ can be computed
in at most $s_{1}s_{2}$ cost of $\alpha_{3}$. Suppose $\alpha_{1}
\sim \alpha_{2}$ and $\alpha_{2} \sim \alpha_{3}$, then $\alpha_{1}
\sim \alpha_{3}$ Suppose $\alpha_{1} \hookrightarrow \alpha_{2}
\hookrightarrow \ldots \hookrightarrow \alpha_{n} \hookrightarrow
\alpha_{1}$, then $\alpha_{1} \sim \alpha_{2} \sim \ldots \sim
\alpha_{n} \sim \alpha_{1}$.
\end{proposition}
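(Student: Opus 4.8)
The plan is to prove everything from the first assertion together with its quantitative refinement, since transitivity of $\sim$ and the cyclic statement will then follow by purely formal manipulation. So first I would establish the transitivity of $\hookrightarrow$ and the multiplicative cost bound simultaneously, by a substitution argument carried out by structural induction on the recursive definition of an $\alpha$--based ps--circuit. Assume $\alpha_1 \hookrightarrow \alpha_2$ is witnessed by an $\alpha_2$--based ps--circuit $\beth$ computing $\alpha_1$ with $s(\beth)=s_1$, and that $\alpha_2 \hookrightarrow \alpha_3$ is witnessed by a (finite) $\alpha_3$--based ps--circuit $\beth_0$ computing $\alpha_2$ with $s(\beth_0)=s_2$. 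I would define a transformation $\beth\mapsto\widetilde{\beth}$ that replaces, inside $\beth$, every use of $\alpha_2$ by the circuit $\beth_0$, and show that $\widetilde{\beth}$ is an $\alpha_3$--based ps--circuit computing $\alpha_1$ with $s(\widetilde{\beth})=s_1 s_2$.

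The induction follows the four clauses defining an $\alpha$--based ps--circuit. In the base case $\beth=\delta$ with $\delta$ of linear cost, I set $\widetilde{\beth}=\delta$; here $s(\beth)=0$ and $s(\widetilde{\beth})=0=0\cdot s_2$. In the trivial--circuit case $\beth=I(\delta_1,\alpha_2,\delta_2)=\delta_2\circ\alpha_2\circ\delta_1$, I replace the single use of $\alpha_2$ by $\beth_0$ and re--express the result inside the class as the iterated sequential composition $\widetilde{\beth}=S(S(\delta_1,\beth_0),\delta_2)$, where $\delta_1,\delta_2$ are viewed as base--case ps--circuits; this is an $\alpha_3$--based ps--circuit with $s(\widetilde{\beth})=s(\beth_0)=s_2=1\cdot s_2$, and since $\beth_0$ computes the same correspondence as $\alpha_2$ it still computes $\delta_2\circ\alpha_2\circ\delta_1$ at this node.

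For the parallel and sequential clauses, $\beth=P(\delta_1,\gamma_1,\gamma_2,\delta_2)$ and $\beth=S(\gamma_1,\gamma_2)$, I would apply the induction hypothesis to the sub--circuits $\gamma_1,\gamma_2$, put $\widetilde{\beth}=P(\delta_1,\widetilde{\gamma}_1,\widetilde{\gamma}_2,\delta_2)$ respectively $\widetilde{\beth}=S(\widetilde{\gamma}_1,\widetilde{\gamma}_2)$, and use the additivity $s(P(\cdots))=s(\gamma_1)+s(\gamma_2)=s(S(\gamma_1,\gamma_2))$ together with $s(\widetilde{\gamma}_i)=s(\gamma_i)s_2$ to obtain $s(\widetilde{\beth})=(s(\gamma_1)+s(\gamma_2))s_2=s(\beth)s_2$. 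Taking $\beth$ to be the top--level circuit yields $\alpha_1\hookrightarrow\alpha_3$ at cost $s_1 s_2$, which is exactly the first and second assertions. The remaining statements are then formal: unwinding $\alpha_1\sim\alpha_2$ as the conjunction of $\alpha_1\hookrightarrow\alpha_2$ and $\alpha_2\hookrightarrow\alpha_1$, and similarly for $\alpha_2\sim\alpha_3$, transitivity of $\hookrightarrow$ gives $\alpha_1\hookrightarrow\alpha_3$ and $\alpha_3\hookrightarrow\alpha_1$, hence $\alpha_1\sim\alpha_3$; and for a cycle $\alpha_1\hookrightarrow\cdots\hookrightarrow\alpha_n\hookrightarrow\alpha_1$, traversing the loop in either direction and applying transitivity yields $\alpha_i\hookrightarrow\alpha_j$ for every ordered pair, whence $\alpha_i\sim\alpha_j$ for all $i,j$.

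I expect the main obstacle to be the closure step of the induction, namely checking that the substituted object $\widetilde{\beth}$ genuinely lies in the recursively defined class of $\alpha_3$--based ps--circuits. The delicate point is precisely the trivial--circuit clause: the definition only admits $I$ with a \emph{single} map $\alpha$ in its middle slot, so inserting the composite circuit $\beth_0$ there is not literally an instance of that clause, and I must reformulate $I(\delta_1,\beth_0,\delta_2)$ as $S(S(\delta_1,\beth_0),\delta_2)$ in order to remain inside the class. Once this reformulation is in place, the size--neutrality hypothesis ensures that the finitely many interposed linear--cost maps do not disturb the timing, so the cost bookkeeping propagates cleanly through the recursion and the finiteness of $\beth$ and $\beth_0$ guarantees that $\widetilde{\beth}$ is again finite.
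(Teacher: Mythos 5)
Your proof is correct. Note, however, that the paper contains no proof of this proposition at all: it is stated explicitly as a recollection, ``gluing together,'' of results proved in Section 4.2 of the cited Pak--Vallejo paper, so there is no in-paper argument to compare against. Your substitution argument by structural induction on the four clauses defining an $\alpha$--based ps--circuit --- including the genuinely necessary care in the trivial-circuit clause, where $I\(\delta_{1}, \beth_{0}, \delta_{2}\)$ is not literally an instance of the definition and must be re-expressed as the nested sequential circuit $S\(S\(\delta_{1}, \beth_{0}\), \delta_{2}\)$ to remain inside the class --- is precisely the standard argument underlying the cited result, with the only cosmetic remark being that if $\beth_{0}$ merely satisfies $s\(\beth_{0}\) \leq s_{2}$ your bookkeeping yields $s(\widetilde{\beth}) \leq s_{1}s_{2}$ rather than equality, which is all the statement requires.
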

We state now the computational complexity of bijection
$\blacklozenge$ and the reduction of conjugation symmetry map.
\begin{alg}\label{alg}{\em[Bijection $\blacklozenge$.]}
\qquad \\
\emph{Input:} LR tableau $T$ of skew shape $\lambda / \mu$, with
$\lambda=\(\lambda_{1} \geq \ldots \geq \lambda_{n}\)$, \\
$\mu=\(\mu_{1} \geq \ldots \geq \mu_{n}\)$, and filling
$\nu=\(\nu_{1} \geq \ldots \geq \nu_{n}\)$, having $A=\(a_{i,j}\)
\in M_{n \times n}\(\N\)$ \quad ($a_{i,j}=0$ if $j>i$) as (lower
triangular) recording matrix.

Write $\widetilde{A}$, a copy of the matrix $A$.

For $j:=n$ down to $2$ do

\quad For $i:=1$ to $n$ do

\quad \quad Begin

\quad \quad \quad If $i=j$ then
$\widetilde{a}_{i,i}:=\widetilde{a}_{i,i}+\lambda_{1}-\lambda_{i}$

\quad \quad \quad \phantom{If $i=j$} else

\quad \quad \quad \phantom{If $i=j$ else} If $j>i$ then
$\widetilde{a}_{i,j}=0$ else
$\widetilde{a}_{i,j}:=\widetilde{a}_{i,j}+\widetilde{a}_{i,j+1}$.

\quad \quad End

\bigskip

So far the computational cost is $O\(n^{2}\)=O\(\langle A
\rangle\)$.

\bigskip

Set a matrix $B=\(b_{i,j}\) \in M_{\lambda_{1} \times
\lambda_{1}}\(\N\)$ such that $b_{i,j}=0$ for all $i,j$.

For $i:=1$ to $n$ do

\quad Begin

\quad \quad Set $c:=0$.

\quad \quad For $j:=0$ to $n$ do

\quad \quad \quad Begin

\quad \quad \quad \quad $r:=\widetilde{a}_{i+j,i}-a_{i+j,i}$, \quad see Remark~\ref{R:comp}.

\quad \quad \quad \quad For $t:=1$ to $a_{i+j,i}$ do
$b_{r+t,c+t}:=b_{r+t,c+t}+1$.

\quad \quad \quad \quad $c:=c+a_{i+j,i}$.

\quad \quad \quad End

\quad End

This part has total computational cost at most equal to
$$O\(\sum_{1 \leq i.j \leq n}a_{i,j}\)=O\(|\lambda \setminus \mu|\)
=O\(|\lambda|-|\mu|\)=O\(\langle T \rangle\).$$

\emph{Output:} $B$ recording matrix of the output tableau.

\end{alg}

\begin{remark} \label{R:comp}
For all $1 \leq i \leq n$ and $0\leq j \leq n-i+1$,
we have
$$\widetilde{a}_{i+j+1,i}-\widetilde{a}_{i+j,i} \geq a_{i+j+1,i}.$$
\end{remark}
From Theorem \ref{reduc} and this algorithm we have
\begin{theorem}
The conjugation symmetry maps $\varrho^{BSS}$, $\varrho^{WHS}$ and $\varrho_{3}$ are
identical, and linear equivalent to the Sch\"utzenberger involution
$E$,
$$\begin{matrix}
T&\overset{\text{$e\,\bullet$}}\longleftrightarrow&T^{e\bullet}&\overset{\text{$\blacklozenge$}}\longleftrightarrow& {T^{e\bullet}}^\blacklozenge\\
{\tau}\updownarrow&&{\tau}\updownarrow\\
P&\underset{E}{\overset{\text{\scriptsize
evacuation}}\longleftrightarrow}&P^E.
\end{matrix}$$
Thus conjugation symmetry maps and commutative symmetry maps are linearly reducible to each other.
\end{theorem}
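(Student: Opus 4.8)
The plan is to obtain the statement by assembling three facts already in hand: the coincidence of the three maps, the reduction scheme of Theorem~\ref{reduc}, and the linear cost of $\blacklozenge$ furnished by Algorithm~\ref{alg}. First I would settle the identity claim. Fulton's theorem gives $\varrho^{WHS}=\varrho^{BSS}$, and the Corollary to Theorem~\ref{plr*} shows that $T^{e\bullet\blacklozenge}$ is the unique tableau Knuth equivalent to $Y(\nu^t)$ and dual equivalent to $\widehat T^t$---which is exactly the defining characterization of $\varrho^{BSS}(T)$; hence $\varrho^{BSS}=\varrho_3$, and the three maps agree as correspondences.

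Next I would check the displayed commutative diagram. Its left square is Theorem~\ref{reduc} with the two steps $e$ and $\bullet$ fused into one: under the bijection $\tau$ the composite $e\,\bullet$ sends $T$ to $T^{e\bullet}$ and corresponds to the evacuation $P\mapsto P^E$, since $\tau(T)=P$ and $\tau(T^{e\bullet})=P^E$ by Theorem~\ref{reduc}. The only new arrow is the top $\blacklozenge$ carrying $T^{e\bullet}$ to $T^{e\bullet\blacklozenge}=\varrho_3(T)$, so commutativity is immediate from the definition of $\varrho_3$.

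The substance of the equivalence is to read this diagram as two ps--circuits. For $\varrho_3\hookrightarrow E$ I would present the $E$--based circuit that inputs $T$, applies $\tau$, then one evacuation $E$, then $\tau^{-1}$ to recover $T^{e\bullet}$, and finally $\blacklozenge$; as $\tau,\tau^{-1},\bullet$ are linear maps and $\blacklozenge$ has linear cost by Algorithm~\ref{alg} and Remark~\ref{R:comp}, this circuit has $E$--cost $1$. Running the same diagram in reverse gives $E\hookrightarrow\varrho_3$: input $P$, apply $\tau^{-1}$, then one $\varrho_3$, then the involution $\blacklozenge$ to strip it off and return to $T^{e\bullet}$, then $\tau$ to reach $P^E$; this circuit has $\varrho_3$--cost $1$. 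Hence $\varrho_3\sim E$, and by the identity of the maps the same holds for $\varrho^{BSS}$ and $\varrho^{WHS}$.

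Finally I would conclude by transitivity of $\sim$ (the Proposition) together with the theorem recalled from~\cite{pak}, whose list contains both $E$ and the commutative (fundamental) symmetry maps as mutually linearly equivalent: chaining $\varrho^{BSS}=\varrho^{WHS}=\varrho_3\sim E$ with the equivalence of $E$ to the commutative symmetry maps shows every conjugation and every commutative symmetry map is linearly reducible to each of the others. The sole genuinely technical obstacle is the linear cost of $\blacklozenge$, which is precisely what Algorithm~\ref{alg} establishes; everything else is bookkeeping in the ps--circuit formalism, where one must only confirm that each auxiliary map is linear-cost and size-neutral so that the $E$--cost (respectively $\varrho_3$--cost) remains equal to one.
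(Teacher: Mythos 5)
Your proposal is correct and follows essentially the same route as the paper: the paper's proof is precisely the assembly you describe---Fulton's identity $\varrho^{WHS}=\varrho^{BSS}$, the corollary to Theorem~\ref{plr*} identifying $\varrho_3(T)=T^{e\bullet\blacklozenge}$ with $[Y(\nu^t)]_K\cap[\widehat{T}^t]_d$, the commutative scheme of Theorem~\ref{reduc} transported by the linear map $\tau$, and the linear cost of $\blacklozenge$ from Algorithm~\ref{alg}---combined with the Pak--Vallejo equivalences via transitivity. Your two ps--circuits of $E$--cost $1$ and $\varrho_3$--cost $1$ (using that $\blacklozenge$ is an involution to run the diagram backwards) are exactly the content the paper compresses into ``From Theorem~\ref{reduc} and this algorithm we have.''
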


\noindent{\bf Acknowledgement:} The first author wishes to thank
 Igor Pak and Ernesto Vallejo for their interest in
the papers \cite{az,az1}.

\nocite{*}

\end{document}